\numberwithin{equation}{section}
\theoremstyle{plain}
\newtheorem{theorem}{Theorem}[section]
\newtheorem{proposition}[theorem]{Proposition}
\newtheorem{lemma}[theorem]{Lemma}
\newtheorem{conjecture}[theorem]{Conjecture}
\newtheorem{problem}[theorem]{Problem}
\newtheorem{question}[theorem]{Question}
\theoremstyle{definition}
\newtheorem{remark}[theorem]{Remark}
\newcommand{\eq}{\eqref}
\newcommand{\bigo}{\mathrm{O}}
\newcommand{\lito}{\mathrm{o}}
\def\be#1{\begin{equation*}#1\end{equation*}}
\def\ben#1{\begin{equation}#1\end{equation}}
\def\bes#1{\begin{equation*}\begin{split}#1\end{split}\end{equation*}}
\def\besn#1{\begin{equation}\begin{split}#1\end{split}\end{equation}}
\def\ba#1{\begin{align*}#1\end{align*}}
\def\ban#1{\begin{align}#1\end{align}}
\def\tsfrac#1#2{{\textstyle\frac{#1}{#2}}}
\def\eps{\varepsilon}
\def\mc#1{\mathcal{#1}}
\newcommand{\IE}{\mathbb{E}} 
\newcommand{\IP}{\mathbb{P}} 
\newcommand{\var}{\mathrm{Var}}
\newcommand{\nei}{\mc N}
\newcommand{\gr}{\mc G}
\def\ER{Erd\H{o}s-R\'enyi}
\newcommand{\ps}{\mc{P}}
\newcommand{\Bi}{\mathrm{Bi}}
\newcommand{\Ind}{\mathrm{I}}
\newcommand{\dist}{\mathrm{d}}
\newcommand{\Index}{\mathcal{I}}
\newcommand{\mrec}{M_{\mathrm{rec}}}
\newcommand{\Nr}{N_{n,d,r}}
\newcommand{\sph}{\mathcal{S}}
\newcommand{\bz}{{\mathbf Z}}
\newcommand{\mct}{\mathcal{T}}
\begin{document}

\title{\sc\bf\large\MakeUppercase{
Shotgun assembly of labeled graphs}}
\author{
	Elchanan Mossel
	\thanks{University of California, Berkeley and University of Pennsylvania; \texttt{elmos@mit.edu}}
	\and
	Nathan Ross
	\thanks{University of Melbourne; \texttt{nathan.ross@unimelb.edu.au}}
}
\date{\today}
\maketitle

\begin{abstract}
We consider the problem of reconstructing graphs or labeled graphs from neighborhoods of a given radius $r$. 
Special instances of this problem include the well-known:
 DNA shotgun assembly; the lesser-known:
  neural network reconstruction; and a new problem:
  assembling random jigsaw puzzles. 
We provide some necessary and some sufficient conditions for correct recovery 
both in combinatorial terms and for some generative models including random labelings of lattices, \ER\ random graphs, 
and a random jigsaw puzzle model. Many open problems and conjectures are provided.
\end{abstract}

\section{Introduction}
In  this paper we study the problem of inferring a graph with labels from a collection of local ``$r$-neighborhoods" of the graph. 
In particular we ask how large $r$ must be to ensure that a given randomly generated graph with labels can be uniquely identified---up 
to some natural family of isomorphisms which we always take to be rotations in planar graphs---by its $r$-neighborhoods. Note that if the neighborhoods are too small then identifiability may be impossible: if $r=1$ and all of the vertex
labels are the same, then a graph is only identifiable from its usual $1$-neighborhoods if the degree sequence determines a unique graph.
As far as we know graph shotgun assembly for generative models has not been considered before in the level of generality considered here. Some motivating examples include: 
\begin{itemize} 
\item
DNA shotgun assembly: the goal is to reconstruct a DNA sequence from ``shotgunned" stretches of the sequence. 
The theoretical version of this problem is graph shotgun assembly of a path graph with each vertex corresponding to a site in the genome,
 and so is labeled with an $A, C, G,$ or $T$ 
standing for the nucleotides making up DNA. 
The neighborhoods are paths of adjacent vertices of length $r$, which are referred to as ``reads". 
Shotgun assembly is one of the major techniques for reading DNA sequences and so the theoretical problem
is already well understood. A main question is to determine  how large $r$ has to be to reconstruct the sequence with sufficiently high probability 
under different models of vertex labeling, see  e.g., \cite{Arratia1996}, \cite{Dyer1994}, and \cite{Motahari2013} and references therein.
Note too that in practice the reads are different lengths and have errors. 
\item 
Reconstructing neural networks: recent work in applied neuroscience identifies graph shotgun assembly as an
important problem for reconstructing neural networks;
the goal is to reconstruct a big neural network from subnetworks that are observed in experiments
\cite{Soudry2013}.
\item 
A new problem we call the {\em random jigsaw puzzle problem}. Consider a jigsaw puzzle of size 
$n \times n$ where the border between every two adjacent pieces is drawn uniformly at random using one in $q$ shapes of interfaces
which we call ``jigs." How large should $q$ be so that the puzzle can be recovered uniquely? How can this be done efficiently? 
\end{itemize}

The problem considered here is most closely related to the
 famous \emph{reconstruction conjecture} in combinatorics \cite{Kelly1957} \cite{Harary1974}
which can be stated as follows: a graph $G$ on at least $3$ vertices is uniquely determined by the multi-set of all vertex-deleted subgraphs of $G$. Here a vertex deleted subgraph of $G$ is a graph induced on all the vertices of $G$ but one.
In this paper we are interested in reconstructing graphs with labels from seemingly less information: 
given the graph we assume that we are given all $r$-neighborhoods in the graph. 
While the information is more localized, we make the additional assumption that either the graph structure or the labels are random. As is frequently the case in such settings, randomness makes the problem easier. Indeed, we show that for some
popular random graphs models, reconstruction is possible from relatively small neighborhoods. 

The graph shotgun problem is also related to the graph isomorphism problem \cite{babai1980}. It is a famous open problem to determine the complexity of graph isomorphism. In fact, one may consider a variant of the graph isomorphism problem in our setup: given the neighborhoods of two samples drawn from the same generative model, such that the samples were generated either independently or identically, can it feasibly be determined which method of sample generation was used? Part of the difficulty of the problem in this setup is that it may be required to determine if two neighborhoods are isomorphic or not. While we leave the question of graph isomorphism for randomly generated graphs for future work, we note that some of the techniques used for the classical graph isomorphism
problem are related to our results. In particular our techniques for studying dense random graphs in Section~\ref{secdenseER} 
resemble some of the algorithms suggested for graph isomorphisms for some subclasses of graphs~\cite{Cai1992}. 


We also note that  the question of whether an infinite graph is determined by some collection of its finite subgraphs 
has been studied in the context of unimodular and transitive infinite graphs \cite{Aldous2007} \cite{Frisch2016}.

\subsection{General setup, models, main results}

A (deterministic or random) graph $\gr=\gr_N$  with $N$ vertices and labels (again possibly random)
from a finite set on each vertex or edge is given.
Each vertex $v$ has a ``neighborhood" $\nei_r(v)$ of ``radius" $r$ which could be all of the vertices at distance $r$ or some variation (see
the examples below); we assume that location of vertex $v$ is given in $\nei_r(v)$. 
\begin{itemize}
\item[Q1.] (Identifiability) Given each of the $N$ neighborhoods $\nei_r(v)$ for $v$ a vertex in the network, can
we correctly identify (up to a natural family of isomorphisms) the graph $\gr$ and its labels? We view this question as having two parts:
(a) combinatorial criteria for identifiability (or non-identifiability), and (b) the probability of identifiability under
particular
random generative models.

\item[Q2.] (Reconstruction) Assuming identifiability for a given $\gr_N$ and $r$, for $0<\eps<1$,
what is the minimum number, $\mrec(N, r, \eps)$, of samples (with replacement)  from the collection of neighborhoods
that is necessary to ensure that the chance of correctly reconstructing the
network $\gr$ with labels from the sample is at least $1-\eps$?  
\end{itemize}

Questions Q1(a) and Q2 are discussed in Section~\ref{seccombsamp}, where we 
derive general results about combinatorial criteria for identifiability and 
upper and lower bounds on $\mrec(N, r, \eps)$ based on coupon collecting. 
Notably our conditions for non-identifiability require that the graph is not isomorphic to small perturbations of the graph obtained by replacing a neighborhood with a
non isomorphic neighborhood (thus avoiding the difficulty of the reconstruction conjecture). 
In Sections~\ref{seclattice},~\ref{secER}, and~\ref{secpuzz}, Question~Q1(b) is discussed and the general results of Section~\ref{seccombsamp}
are applied in the following three examples.
Let $\dist(v,w)$ denote the distance between two vertices in a graph.
\begin{enumerate}
\item\label{ex1} $\gr$ is the $d$ dimensional $n$-lattice, here denoted $\bz_n^d$, with i.i.d.\ vertex labels from a probability distribution on $\{1, \ldots, q\}$ 
and the neighborhoods $\nei_r(v)$ are the $(n-r+1)^d$ $r$-cubes with orientation;
here our neighborhoods differ slightly from the general setup and $N=\Nr:=(n-r+1)^d$.
The goal is to identify $\gr$ from these neighborhoods.

\item\label{ex2} $\gr$ is an \ER\ random graph with vertex set $V$ of size  $N$ and edge probability $p_N$ where the vertices have no labels (or you can think of each having the same label)
and the  $r$-neighborhoods, $\nei_r(v), v\in V$, are the subgraphs induced by the vertices at distance no greater than $r$ from each vertex. 
The goal is to correctly identify $\gr$ up to graph isomorphism from these neighborhoods.
We also consider labeled versions of the model. 

\item\label{ex3} \emph{The random jigsaw puzzle problem}. $\gr$ is the $n\times n$ lattice and we view each vertex as being the center of a puzzle piece
with each of the four edges receiving one of $q$ jigs.
Thus each vertex is labeled with an ordered $4$-tuple (pieces are oriented) of the $q$ possible labels  (jigs), corresponding to edge labels. Note that adjacent vertices have dependent labels and that ``edge" pieces receive a $4$-tuple of jigs and are not
distinguished. The neighborhoods $\nei_0(v)$
are simply the vertices with labels and correspond to the puzzle pieces.
(See Section~\ref{secpuzz} for a more intuitive description of the model.)
The goal is to correctly identify $\gr$ from these neighborhoods.
\end{enumerate}

The main question we address in these examples is what are conditions on $r$ or $q$ as $N\to\infty$ to ensure
identifiability (or non-identifiability)? We now summarize a subset of our findings and open problems.

\smallskip
\noindent{\bf Example~\ref{ex1}: Lattices.}
In Section~\ref{seclattice}, we find that if the vertices of the lattice are labeled uniformly and independently then, up to constants, 
the asymptotic threshold of $r$  for identifiability is $\log(n)^{1/d}$.
\begin{theorem}\label{thmlatticethreshintro}
For $\bz_n^d$ with vertex labels i.i.d.\ uniform from fixed $q$ labels and taking limits as $n\to\infty$, if for some $\eps > 0$, 
\[
 r^d \leq (1-\eps) \frac{d}{2^{d-1}} \frac{\log n}{\log q},
 \]
then the probability of identifiability from $r$-neighborhoods tends to zero, and
if for some $\eps>0$,
\[
r^d \geq  (1+\eps) 2d   \frac{\log n}{\log q},
\] 
then the probability of identifiability from $r$-neighborhoods tends to one.
\end{theorem}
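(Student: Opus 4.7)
Both bounds are moment-method computations on counts of local configurations. The sufficient direction rests on a first moment showing that coinciding $r$-cubes are rare; the necessary direction rests on a second moment exhibiting a local label-swap invisible to the multiset of $r$-cubes.

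\textbf{Sufficient direction.} The strategy is to show that, under $r^d \geq (1+\eps)\, 2d\log n/\log q$, with probability $1-o(1)$ all $\Nr$ labeled $r$-cubes are distinct, from which identifiability follows. Let $X$ count ordered pairs $(v,v')$ of distinct anchors with $\nei_r(v) = \nei_r(v')$. For disjoint pairs a match forces agreement on all $(r+1)^d$ labels and has probability $q^{-(r+1)^d}$; summed over $O(n^{2d})$ such pairs this contributes $O(n^{2d}\,q^{-(r+1)^d}) = o(1)$. For pairs whose cubes overlap by shift $s=v'-v$, matching forces $s$-periodicity of the label field across a long strip, yielding an exponentially smaller probability and hence $o(1)$ after summing the $O(n^d r^d)$ overlapping pairs. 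Once all $r$-cubes are distinct, a standard overlap-matching argument reconstructs the labeling: adjacent $r$-cubes share a face of $\sim r(r+1)^{d-1}$ labels, and distinctness plus the easier face-uniqueness estimate $n^d q^{-r^d}=o(1)$ lets us recover the lattice adjacencies up to the obvious symmetries.

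\textbf{Necessary direction.} I construct a label-swap invisible in the multiset of $r$-cubes and show by second moment that such a swap exists. Call an ordered pair $(v_1,v_2)$ a \emph{swappable pair} if $\|v_1-v_2\|_\infty > 2r$, $L(v_1)\neq L(v_2)$, and $L(v_1+y)=L(v_2+y)$ for every nonzero $y\in\{-r,\dots,r\}^d$. Given such a pair, the relabeling $L'$ obtained by exchanging $L(v_1)$ and $L(v_2)$ produces the same multiset of $r$-cubes: the translation $w\mapsto w+(v_2-v_1)$ (and its inverse near $v_2$) sets up a bijection between those $r$-cubes under $L'$ that differ from their $L$-counterparts and the $L$-cubes to which they are matched. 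Since $L\neq L'$ on $\{v_1,v_2\}$, the labeling is not identifiable from its multiset. Letting $X$ count swappable pairs,
\be{
\IE[X] \asymp n^{2d}\,(1-1/q)\,q^{-((2r+1)^d-1)},
}
which diverges precisely when $r^d < (1-\eps)\,(d/2^{d-1})\,\log n/\log q$. A second-moment computation, dominated by pairs of swappable pairs with pairwise disjoint matching regions and with shared-vertex or overlapping configurations contributing $o(\IE[X]^2)$, gives $\IE[X^2]=(1+o(1))\IE[X]^2$; Chebyshev then yields $\IP(X\geq 1)\to 1$.

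\textbf{Main obstacle.} The subtlest step is the bijection argument for the swap: each $r$-cube touching $v_1$ or $v_2$ under $L'$ must be matched to an $r$-cube under $L$ with the correct multiplicity, and the separation $\|v_1-v_2\|_\infty>2r$ is exactly what guarantees the matching regions around $v_1$ and $v_2$ are disjoint and each covers every $r$-cube containing its center. The remaining ingredients---the overlapping-cube contribution to the first moment and the cross-terms of the second moment---are routine but require care at configurations where two swappable pairs share a vertex or have nearby matching regions, which are shown to be lower-order and do not affect the leading asymptotics.
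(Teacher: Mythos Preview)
Your approach is essentially the paper's: first moment on coinciding sub-cubes for sufficiency, second moment on ``blocking'' pairs (matching shells, distinct centers) for necessity. The paper makes two simplifications you should adopt. For necessity, it restricts attention to a grid of pairwise-disjoint $(2r-1)$-cubes (anchors $\equiv 0 \bmod (2r-1)$); for uniform labels the indicators $X_{\alpha,\beta}$ over such a grid are \emph{pairwise independent} (conditioning on the labels of $\alpha$ leaves the probability that $\beta$ matches unchanged), so $\var(B)\le\IE B$ immediately and no overlapping-configuration case analysis is needed. For sufficiency, uniform labels give $\IP(\alpha\text{ and }\beta\text{ match})=q^{-(r-1)^d}$ for \emph{every} pair $\alpha\neq\beta$, overlapping or not --- your ``exponentially smaller probability'' claim for overlapping shifts is in fact an equality (label $\alpha\setminus\beta$ freely, then each site of $\beta$ is forced by its shift in $\alpha$) --- so the periodicity bookkeeping is unnecessary. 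One small gap to close: you infer non-identifiability from $L\neq L'$, but identifiability is only up to isomorphism of the labeled lattice; the paper disposes of this by noting that a nontrivial such isomorphism has probability at most $2^d q^{-n^d/2+1}=o(1)$.
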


We conjecture that:
\begin{conjecture} \label{conj:box_week} 
There exists a constant $c_{d,q}$ such that for every $\eps > 0$, when $r^d \geq (1+\eps) c_{d,q} \log n$, the probability of identifiability goes to $1$  as $n\to\infty$, while when $r^d \leq (1- \eps) c_{d,q}\log n$, the probability of identifiability goes to $0$. 
\end{conjecture}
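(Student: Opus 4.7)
The plan is to prove the conjecture by a combined first-moment and second-moment analysis aimed at identifying the precise dominant obstruction to identifiability. The key observation is that identifiability of a random labeling $\sigma$ fails exactly when there exists a distinct labeling $\sigma'$ (not related to $\sigma$ by a lattice symmetry) together with a bijection $\pi$ of the vertex set such that $\nei_r^{\sigma}(v) = \nei_r^{\sigma'}(\pi(v))$ as pointed labeled cubes for every $v \in \bz_n^d$. Finding the sharp constant $c_d(q)$ amounts to classifying the ``cheapest'' such pair $(\sigma',\pi)$ and locating the value of $r^d/\log n$ at which the expected number of such configurations transitions between $o(1)$ and $\omega(1)$.

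First I would identify the candidate dominant ambiguities. In dimension $d=1$ the threshold in Theorem~\ref{thmlatticethreshintro} is already sharp (both bounds equal $2/\log q$), with the dominant obstruction being a pair of length-$r$ repeats in the random string; the natural generalization one should seek in $d\ge 2$ is a pair of disjoint blocks of side $\Theta(r)$ whose labelings agree on a specific set of ``constraint cells'' that allow the two blocks to be exchanged, and I would parametrize these configurations explicitly. Next, a sharp first-moment computation on the candidate class should yield an expected count scaling like $n^{2d} q^{-\alpha_d(q)\, r^d}$ for an explicit exponent $\alpha_d(q)$ determined by the number of constraint cells; the conjectured constant would then be $c_d(q) = 2d/(\alpha_d(q)\log q)$. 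Below the threshold, a second-moment argument, exploiting approximate independence of swap configurations supported on well-separated regions, should show that at least one ambiguity exists with probability tending to $1$. Above the threshold, a union bound over all combinatorial types of multiset-preserving permutations $\pi$ would rule out every other ambiguity; the key estimate is that each additional ``moved vertex'' in $\pi$ costs at least $q^{-\Omega(r^{d-1})}$ in probability, reflecting the surface area of the perturbed region.

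The hardest step will be this last one: the systematic classification of multiset-preserving permutations $\pi$ in dimension $d\ge 2$. Unlike the one-dimensional setting, where ambiguities correspond to well-understood repeat structures in random strings, higher-dimensional lattices admit exotic candidates such as block swaps along curved interfaces, cyclic permutations of many regions, and non-axis-aligned rearrangements. A satisfactory argument would require either a structural theorem asserting that every ``near-minimal'' ambiguity decomposes into disjoint simple swaps, or an entropic encoding that uniformly controls the total contribution of complex ambiguities in the union bound. Developing the correct combinatorial framework for this classification is, in my view, where the real difficulty of the conjecture lies, and it is also the reason the gap between the upper and lower constants in Theorem~\ref{thmlatticethreshintro} widens with $d$.
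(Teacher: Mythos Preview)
This statement is Conjecture~\ref{conj:box_week} in the paper, not a theorem: the authors explicitly leave it open and provide no proof. There is therefore nothing in the paper to compare your attempt against. What the paper does prove is the weaker Theorem~\ref{thmlatticethreshintro}, whose upper and lower constants differ by a factor $4^d$; the conjecture is precisely the assertion that this gap can be closed.

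Your proposal is not a proof but a research outline, and you say as much yourself in the final paragraph: the classification of all multiset-preserving rearrangements $\pi$ in dimension $d\ge 2$, together with a uniform union bound over them, is identified as ``where the real difficulty of the conjecture lies,'' and no argument for it is offered. That is an honest assessment, but it means the proposal has a genuine gap at exactly the point that separates the conjecture from the theorem already in the paper. The first- and second-moment pieces you describe are essentially what Propositions~\ref{proplatticeblock} and~\ref{proplatticeover} already carry out for the specific blocking configuration of two matching $(2r)$-boxes; sharpening the constant on the lower side via more refined swap configurations is plausible, but the decisive obstacle is the upper bound, where one must rule out \emph{every} non-isomorphic labeling with the same neighborhood multiset, not merely the box-swap family. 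Your sketch correctly locates this obstacle but does not overcome it.
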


More ambitiously we can ask:

\begin{question} \label{conj:box_threshold} 
Does there exist a constant $c_d$ such that for every $\eps > 0$, when $r^d \geq (1+\eps) c_d \frac{\log n}{\log q}$, the probability of identifiability goes to $1$ as $n\to\infty$, while when $r^d \leq (1- \eps) c_d \frac{\log n}{\log q}$,
the probability of identifiability goes to $0$? 
\end{question}

In both cases finding the value of  the constant, $c_{d,q}$ or $c_d$, is  a challenging open problem. The case of non-uniform labels is also discussed in Section~\ref{seclattice}.

\smallskip
\noindent{\bf Example~\ref{ex2}: \ER\ graphs.}
The results of Section~\ref{secER} show that for $\lambda\not=1$, the asymptotic threshold for identifiability in the sparse \ER\ random graph is $\log(N)$ (up to constants).

\begin{theorem}\label{thmerintro}
For the \ER\ graph on $N$ vertices with $p_N=\lambda/N$ for fixed $\lambda>0$ and taking limits as $N\to\infty$, if for some $\eps>0$
\be{
\frac{r}{\log(N)}<\frac{1}{2(\lambda-\log(\lambda))}-\eps,
}
then the probability of identifiability from $r$-neighborhoods tends to zero.
\begin{itemize}
\item If $\lambda<1$ and for some $\eps>0$, 
\be{
\frac{r}{\log(N)}>\frac{1}{\log(1/\lambda)}+\eps,
}
then the probability of identifiability from $r$-neighborhoods tends to one. 
\item If $\lambda>1$ and $\lambda_*<1$ is the unique solution to $\lambda e^{-\lambda}=\lambda_* e^{-\lambda_*}$, 
and for some $\eps>0$,
\be{
\frac{r}{\log(N)}>\frac{1}{\log(\lambda)}+\frac{2}{\log(1/\lambda_*)}+\eps, 
}
then the probability of identifiability from $r$-neighborhoods tends to one. 
\end{itemize}
\end{theorem}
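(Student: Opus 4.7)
The proof decomposes into three parts: non-identifiability when $r$ is small, and identifiability in the subcritical ($\lambda<1$) and supercritical ($\lambda>1$) regimes.

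For the non-identifiability bound, I would apply the combinatorial criterion of Section~\ref{seccombsamp}, which reduces matters to exhibiting, with probability tending to $1$, a local operation on $\gr$ that preserves the multiset of rooted $r$-neighborhoods but produces a non-isomorphic graph. The candidate configuration is a pair of edges whose endpoints' rooted $r$-neighborhoods (after edge-deletion) are isomorphic in such a way that a rewiring among them changes the isomorphism class of $\gr$. Using the local convergence of the ER graph to a Poisson($\lambda$) Galton-Watson tree truncated at depth $r$, I would bound the probability that a specific pair of vertices has such matching neighborhoods from below by $(\lambda e^{-\lambda})^{2r}=e^{-2r(\lambda-\log\lambda)}$, the probability that both neighborhoods are the simple path of length $r$. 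A first-moment count of swappable pairs of well-separated edges is then of order $N^{2}e^{-4r(\lambda-\log\lambda)}$, which exceeds $1$ precisely at the stated threshold; a second-moment calculation (made tractable by imposing well-separatedness of the configurations) yields existence with probability tending to $1$.

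For the subcritical identifiability bound, the expected number of vertices at distance exactly $r$ from a fixed vertex is at most $\lambda^r$, by stochastic domination of the breadth-first exploration by a Poisson($\lambda$) Galton-Watson process. Under the hypothesis $r>\log N/\log(1/\lambda)+\eps$, we have $\lambda^r=o(1/N)$, and a union bound over the $N$ vertices shows that with probability tending to $1$, every $r$-neighborhood equals the entire connected component of its root. The multiset of rooted $r$-neighborhoods then determines the multiset of unrooted components, and hence $\gr$ up to isomorphism.

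In the supercritical regime I would decompose $\gr$ into its giant component $\gr_1$ and collection of small components $\gr_<$. The diameter of $\gr_1$ is $(1+o(1))\log N/\log\lambda$ with probability tending to $1$ by classical ER estimates, so $r>\log N/\log\lambda+\eps$ ensures that every $r$-neighborhood rooted in $\gr_1$ reveals all of $\gr_1$. By the classical duality for ER, the components of $\gr_<$ locally resemble a subcritical Poisson($\lambda_*$) Galton-Watson tree, and a union bound as in the subcritical case controls the depth of each small component from any vertex by $(1+\eps/2)\log N/\log(1/\lambda_*)$. Bounding the diameter of a tree by twice its depth from any vertex yields eccentricities at most $(2+\eps)\log N/\log(1/\lambda_*)$ uniformly across $\gr_<$, which explains the factor $2$ in the second term; reconstruction then proceeds as in the subcritical case. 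The main obstacle is the non-identifiability bound: one must both (i) lower-bound the tree-matching probability with the sharp exponent capturing the constant $2(\lambda-\log\lambda)$, and (ii) carry out the second-moment estimate despite residual dependencies between pairs of configurations at graph distances comparable to $r$.
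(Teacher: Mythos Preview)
Your non-identifiability argument and your subcritical argument are essentially the paper's: the paper exhibits an explicit blocking induced subgraph (an isolated path on $2r+1$ vertices together with an isolated path on $2r+1$ vertices with two ``prongs'' at each end) and runs the second-moment method directly on the number of copies of this subgraph, and for identifiability simply quotes diameter results (\L uczak for $\lambda<1$, Riordan--Wormald for $\lambda>1$). Your version is less explicit on the blocking side (``rewiring edges whose endpoints have path neighborhoods'' amounts to the same configuration once one unwinds the shell condition of Lemma~\ref{lemblock}, which forces matching $(2r)$-neighborhoods and hence the doubled path length), and on the subcritical side you are rederiving the \L uczak diameter bound via the Galton--Watson domination that underlies it.

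The supercritical argument, however, has a genuine gap. Your claim that the diameter of the giant component $\gr_1$ is $(1+o(1))\log N/\log\lambda$ is false: that quantity is the \emph{typical} distance between two uniformly chosen vertices of $\gr_1$, not the diameter. The diameter of $\gr_1$ is $(1+o(1))\bigl(1/\log\lambda + 2/\log(1/\lambda_*)\bigr)\log N$, the Riordan--Wormald constant, and the extra $2/\log(1/\lambda_*)$ arises from long pendant trees hanging off the $2$-core of the giant, not from the small components of $\gr\setminus\gr_1$. Consequently your decomposition does not explain the threshold: if the diameter of $\gr_1$ really were $\log N/\log\lambda$ and the small components had diameter at most $2\log N/\log(1/\lambda_*)$, identifiability would require $r$ to exceed the \emph{maximum} of these two numbers, not their sum, so you would obtain a smaller (and incorrect) threshold. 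The paper avoids all of this by citing the diameter result directly; if you wish to rederive it, you must analyze the pendant-tree structure of $\gr_1$ rather than the separate small components.
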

For $\lambda=1$, the second statement of Theorem~\ref{thmerdiam} below implies that if $rN^{-1/3}\to\infty$, then the probability of identifiability tends to one,
but this is far from the lower bound $\log(N)$ provided by the previous result. 
We make the following conjecture:
\begin{conjecture} \label{conj:sparse_threshold} 
For positive $\lambda\not=1$, there exists a constant $c_\lambda$ such that for every $\eps > 0$,
when $r \geq (1+\eps) c_\lambda \log N$, the probability of identifiability tends to $1$ as $N\to\infty$,
 while when $r \leq (1- \eps) c_\lambda \log N$, the probability of identifiability goes to $0$. 
\end{conjecture}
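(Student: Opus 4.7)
The plan is to establish a sharp threshold by combining monotonicity with matching first- and second-moment computations at a common critical scale. Monotonicity is immediate: the $r$-neighborhoods are determined by the $(r+1)$-neighborhoods, so the event of identifiability is non-decreasing in $r$, and it suffices to identify a single scale at which the transition occurs. The natural candidate for $c_\lambda$ is the reciprocal of the exponential decay rate of the probability that two independent rooted copies of the local weak limit of $G(N,\lambda/N)$ — a Poisson$(\lambda)$ Galton-Watson tree — have isomorphic depth-$r$ truncations. For $\lambda<1$ this limit has finite expected total progeny, which simplifies the analysis; for $\lambda>1$ one must separately analyze the local geometry of the giant component and the subcritical ``dual'' tree of parameter $\lambda_*$, and then verify that both give rise to the same constant.

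For the non-identifiability side, I would extend the first-moment construction underlying the lower bound in Theorem~\ref{thmerintro}. The goal is to produce with high probability a pair $(v,w)$ whose $r$-balls are isomorphic as rooted labeled subgraphs and whose swap yields a graph not isomorphic to the original; the expected count of such pairs should be of order $N^2 \cdot p_r$, where $p_r$ is the collision probability computed in the branching-process limit. A second-moment argument, partitioning pairs by the size of the overlap of their $r$-neighborhoods and exploiting the fact that sparse ER neighborhoods are tree-like away from rare ``bad'' events, should then show that at least one valid obstruction survives with high probability whenever $r \leq (1-\varepsilon) c_\lambda \log N$.

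The hardest step is the matching upper bound, where one must rule out every possible obstruction simultaneously. The existing argument in Theorem~\ref{thmerintro} relies on a crude decomposition and union bound, which is the source of the gap between $1/(2(\lambda-\log\lambda))$ and the constants appearing on the identifiability side. To sharpen this, one would need a union bound weighted by the distribution of rooted $r$-ball isomorphism classes under the Poisson Galton-Watson law, reducing the problem to an entropy computation for the depth-$r$ Galton-Watson tree. The principal obstacle is that the number of rooted isomorphism classes grows very rapidly in $r$, so naive summation is too lossy; one must instead argue that typical classes are ``anchored'' by a few distinguishing features — long paths, unusually high-degree vertices, atypical subtree shapes — that make accidental matches exponentially rare. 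Coordinating this with a quantitative local-to-global comparison between $G(N,\lambda/N)$ and its Galton-Watson limit at depth $c_\lambda \log N$, where boundary effects and cycle formation begin to contribute comparably to the collision probability, is the central technical challenge and is likely where the most care is required.
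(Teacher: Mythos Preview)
The statement you are attempting to prove is Conjecture~\ref{conj:sparse_threshold}, which the paper explicitly leaves open; there is no proof in the paper to compare against. The paper only establishes the non-matching bounds of Theorem~\ref{thmerintro} (via the isolated-path blocking configuration of Proposition~\ref{properblock} and the diameter results of Theorem~\ref{thmerdiam}) and then states the sharp-threshold question as a conjecture, listing the determination of $c_\lambda$ as an open problem.

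Your proposal is therefore not a proof but a research outline, and you acknowledge as much in your final paragraph. Several of the ingredients you list are reasonable starting points: monotonicity in $r$ is immediate, and the identification of $c_\lambda$ with the exponential decay rate of the depth-$r$ collision probability for two independent Poisson$(\lambda)$ Galton--Watson trees is the natural candidate. But two genuine gaps remain. First, on the non-identifiability side, the second-moment argument for general matching $r$-balls (rather than the specific path configuration the paper uses) requires controlling correlations between overlapping pairs of $r$-balls; in the sparse regime these overlaps are not negligible, and you have not indicated how to handle pairs $(\alpha,\beta)$ and $(\gamma,\delta)$ that share a subtree of depth comparable to $r$. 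Second, and more seriously, on the identifiability side you propose to replace the diameter argument with an entropy-weighted union bound over rooted isomorphism classes of depth-$r$ trees, but you give no mechanism for carrying this out: the number of such classes is super-exponential in the typical tree size, and the ``anchoring by distinguishing features'' idea is a heuristic, not an argument. Nothing in your outline explains why the first-moment threshold for collision should coincide with the threshold for \emph{all} $(r-1)$-balls being distinct, which is what Lemma~\ref{lemoverlap} would require; indeed, it is not even clear that uniqueness of overlaps is the right sufficient condition at the conjectured sharp scale. As written, this is a plausible plan of attack on an open problem, not a proof.
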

Natural open problems are to prove the conjecture, find the value of $c_\lambda$, and also to 
better understand the critical case where $\lambda=1$.
The cases of sparse \ER\ with labels and \ER\ with unbounded average degree are also studied in Section~\ref{secER}.
In particular, in the most technical result in the paper we show that if $p_N=\omega(\log(N)^2/ N)$ then neighborhoods of size $3$ are enough to 
ensure identifiability:
\begin{theorem}\label{thmdenseintro}
If $\gr$ is the \ER\ random graph with $N$ vertices and edge probability $p_N$ satisfying $ N p_N / \log(N)^2\to \infty$ as $N \to \infty$ 
and we are given $\nei_3(v)$ for each vertex $v$ in $\gr$, 
then 
the probability of identifiability tends to one.
\end{theorem}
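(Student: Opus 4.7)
My plan is to construct, for each vertex $v$, a \emph{fingerprint} $\sigma(v)$ computable from $\nei_3(v)$ alone, prove that with probability $1-o(1)$ all $N$ fingerprints are pairwise distinct, and then reconstruct $\gr$ by matching fingerprints across overlapping neighborhoods. The crucial extractability observation is that for any $w$ within distance $2$ of $v$, all neighbors of $w$ lie within distance $3$ of $v$; hence $\nei_3(v)$ reveals the rooted isomorphism class of $\nei_1(w)$ for every $w\in\nei_2(v)$, and moreover the full $\nei_2(u)$ for every $u\in\nei_1(v)$. Natural candidate fingerprints are $\sigma(v)=[\nei_1(v)]$ and, if more information is needed, the richer $\sigma(v)=[\nei_2(v)]$.

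Once fingerprints are known to be pairwise distinct, reconstruction is routine: each $\nei_3(v)$ yields a list of edges incident to $v$ with both endpoints labelled by their fingerprints, and the union over $v$ determines $\gr$ up to isomorphism. So the entire content of the theorem is the fingerprint-distinctness claim.

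I would prove distinctness by a union bound over the $\binom{N}{2}$ pairs $u\neq v$. Fix such a pair, condition on $d_u=d_v=k$, on the set of common neighbors of $u$ and $v$, and on whether $u\sim v$. On the conditionally-independent residual randomness, $\sigma(u)=\sigma(v)$ forces an isomorphism of two random rooted neighborhoods, whose probability is bounded by the standard estimate
\[
k!\,\bigl(p_N^2+(1-p_N)^2\bigr)^{\binom{k}{2}}\;\le\; k!\,\exp\!\bigl(-p_N(1-p_N)\,k(k-1)\bigr),
\]
summed over $k$ concentrated near $Np_N$ against a binomial estimate for $\IP(d_u=d_v=k)$. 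The hypothesis $Np_N\gg(\log N)^2$ is tuned so that the total bound beats $1/N^2$; in regimes where $\nei_1$ is too sparse for this estimate to close, one upgrades to the $\nei_2$-level fingerprint and exploits the random bipartite structure of edges between $\nei_1(v)$ and $\nei_2(v)\setminus\nei_1(v)$, which carries order $(Np_N)^2$ independent bits.

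The technical heart, and main obstacle, is this distinctness step. Two delicate points stand out. First, the \emph{borderline regime} $Np_N\approx(\log N)^2$: here the $1$-neighborhoods are too sparse for the edge-isomorphism count to dominate the $k!$ and $N^2$ factors, so one must build the fingerprint out of the $2$-neighborhood's bipartite structure. Second, \emph{close pairs} $\dist(u,v)\le 2$: their neighborhoods share vertices and are genuinely correlated, so one cannot treat the two induced subgraphs as independent and must carefully extract an independent sub-structure on which to run the edge-matching estimate.
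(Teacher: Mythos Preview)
Your high-level strategy matches the paper's: establish that the rooted $2$-neighborhoods $\nei_2(v)$ are pairwise distinct with probability $1-o(1)$, then invoke the overlap Lemma~\ref{lemoverlap}. The reconstruction step and the extractability observations are exactly as you say.

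Where the proposal thins out is the distinctness proof. Your concrete $\nei_1$-level estimate $k!\,(p_N^2+(1-p_N)^2)^{\binom{k}{2}}$ is overwhelmed by the permutation factor $k!\approx\exp(Np_N\log(Np_N))$ whenever $Np_N^2=o(\log(Np_N))$, i.e.\ whenever $p_N=\bigo(N^{-1/2})$---which is essentially the \emph{entire} hard range, not just the extreme borderline $Np_N\approx(\log N)^2$. So the $\nei_2$ upgrade carries almost all the weight, and your ``bipartite bits'' sketch for it is not yet an argument: $\nei_2(v)$ and $\nei_2(w)$ can share many vertices even for distant $v,w$, and the bipartite graph you point to is conditioned on every right-hand vertex having positive degree, so extracting genuinely independent randomness requires more than the phrase ``order $(Np_N)^2$ bits'' suggests.

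The paper's route through this is different from what you outline. It first disposes of $p_N>N^{-3/5}$ via the diameter bound of Bollob\'as, so that henceforth $Np_N^2\to 0$. For $p_N\le N^{-3/5}$ it passes to a far coarser invariant of $\nei_2(v)$ than the bipartite structure: the \emph{multi-set of total degrees} $\{\deg(u): u\sim v\}$ of the neighbors of $v$. After conditioning on $\deg(v)=\deg(w)$ and on the set of common neighbors, the degree of each of the $K\approx Np_N$ \emph{non-common} neighbors of $v$ is written as $V_i+A_i$, where the $V_i\sim\Bi(N-2K-1,p_N)$ count edges into the bulk and are i.i.d.\ and independent of the analogous $W_j$ for $w$, while $A_i$ counts edges inside $\nei_1(v)\cup\nei_1(w)$; the assumption $p_N\le N^{-3/5}$ forces $Kp_N\to 0$, so $\max_i A_i\le 13$ with probability $1-\lito(N^{-2})$. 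The event that the two degree multi-sets agree is then bounded by comparing the count vectors $X_j=|\{i:V_i+A_i=j\}|$ and $Y_j$ at $\Theta(\sqrt{Np_N})$ indices $j$ near the binomial mode, applying the local CLT sequentially to extract a factor $\bigo((Np_N)^{-1/4})$ at each index; the product over $\sqrt{Np_N}\gg\log N$ indices beats $N^{-2}$, and this is exactly where the hypothesis $Np_N\gg(\log N)^2$ is spent. This decomposition also dissolves your ``close pairs'' worry automatically: once the common neighbors are set aside, the bulk contributions $V_i$ and $W_j$ are independent regardless of $\dist(v,w)$.
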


\noindent{\bf Example~\ref{ex3}: Jigsaw puzzle.}
In Propositions~\ref{propjig1} and~\ref{propjig2}, we show that if $q=\lito(n^{2/3})$, then the probability of identifiability tends
to zero and if $q=\omega(n^{2})$, then the probability of identifiability tends to one. 
We do not believe that either the constant $2/3$ or the constant $2$ is sharp but conjecture there is a critical exponent: 
\begin{conjecture} \label{conj:jigsaw} 
For the jigsaw puzzle problem, there exists a constant $c$ such that for all $\eps > 0$ if
\begin{itemize}
\item
$q \leq n^{c-\eps}$ then the probability of identification goes to $0$ as $n \to \infty$ and if 
\item
$q \geq n^{c+\eps}$ then the probability of identification goes to $1$ as $n \to \infty$.
\end{itemize}
\end{conjecture}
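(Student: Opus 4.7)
The plan is to establish the existence of a sharp threshold at $q=n^c$ by combining a monotonicity argument with a general sharp threshold theorem, and then to pin down the critical exponent via moment estimates on local ambiguities.

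First, I would show that the identifiability probability is monotone non-decreasing in $q$. The natural coupling generates each edge label as an i.i.d.\ uniform $[0,1]$ random variable and then discretizes to $q$ equal bins; refining the partition can only refine the equivalence classes of puzzle pieces, so any ambiguity at a larger $q$ already forces an ambiguity at every smaller $q$. With monotonicity in hand, a sharp threshold theorem of Friedgut--Kalai or Bourgain type should yield a transition window of width $o(1)$ on the $\log q / \log n$ scale, provided the critical probability stays bounded away from the trivial regimes. Combined with Propositions~\ref{propjig1} and~\ref{propjig2}, this implies that the sequences $c_n^- := \sup\{c : \IP(\text{identifiable at } q=n^c) \to 0\}$ and $c_n^+ := \inf\{c : \IP(\text{identifiable at } q=n^c) \to 1\}$ differ by $o(1)$ and both lie in $[2/3,2]$.

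To upgrade this to a single limiting constant $c$, I would analyze the dominant obstructions to identifiability. Heuristically, these are local swaps: small clusters of puzzle pieces admitting an alternative valid arrangement that uses the same multiset of pieces. The probability that a fixed swap involving a region of size $k$ is valid should scale like $q^{-\alpha k}$, with $\alpha$ determined by the perimeter of the swapped region, while the number of positions for such a swap is polynomial in $n$. Setting the expected count of swaps of each fixed size to order one yields a critical scale $q = n^{c_0}$ for an explicit $c_0$. A matching second moment computation at this scale, establishing concentration of the number of local ambiguities, would then give $c = c_0$.

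The main obstacle will be ruling out large-scale or global ambiguities---long chains of swaps, simultaneous rearrangements of distant regions, or exotic valid reassemblies involving many pieces spread across the puzzle. Each such class must be shown to contribute a vanishing expected count at the critical scale, which requires a structural understanding of the space of valid puzzle assemblies for a random jigsaw. A secondary difficulty is that the natural independent coordinates are the edge labels, which are shared between adjacent pieces and induce correlations in the piece-level description; this should be handled by applying the sharp threshold theorem directly to the edge-label product space and re-expressing identifiability as a monotone event under refinement of the label partition, as in the monotonicity step above.
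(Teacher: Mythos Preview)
The statement is a \emph{conjecture} in the paper, not a theorem; the paper offers no proof, only the bounds $2/3$ and $2$ from Propositions~\ref{propjig1} and~\ref{propjig2}. So your proposal is an attempt at something genuinely open, and it should be assessed on those terms.

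The monotonicity coupling is fine, but the central gap is the appeal to Friedgut--Kalai or Bourgain. Those theorems concern monotone events in a Bernoulli product space $\{0,1\}^m$ as a bias parameter $p$ varies continuously within a \emph{fixed} space; the conclusion is a narrow transition window in $p$. Here the parameter is the alphabet size $q$, which is discrete and changes the underlying space $\{1,\ldots,q\}^m$ itself rather than a bias inside one space. Your refinement coupling establishes monotonicity but does not embed all values of $q$ into a single product measure indexed by a real parameter to which those theorems apply; and ``applying the sharp threshold theorem directly to the edge-label product space'' is not meaningful, since at fixed $q$ identifiability is not a monotone Boolean function of the edge labels. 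Even granting some per-$n$ sharp threshold around $q_c(n)$, you would only get $c_n^+ - c_n^- = o(1)$; nothing forces $\log q_c(n)/\log n$ to converge rather than oscillate within $[2/3,2]$. You acknowledge this and propose to locate $c$ by first and second moment analysis of local swaps, but the part you flag as the main obstacle---ruling out nonlocal or global rearrangements---is exactly the content of the conjecture, and the proposal offers no mechanism for it. In short, the outline identifies plausible ingredients but none of the steps currently goes through.
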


A number of additional open problems and conjectures are given in each section and we conclude the paper with a summary of these and other outstanding questions in Section~\ref{secopen}, but mention a few extensions here. 
This work is a first step in a theoretical understanding of the graph shotgun 
assembly problem and so we consider fundamental models of graphs with labels. 
It is of interest to study the problem for more realistic models of random networks, which will require more sophisticated tools. 
We also have only considered either
unlabeled graphs or graphs that have i.i.d.\ labels,
but the questions considered here can naturally be extended to labelings of the graph 
outside of the i.i.d.\ case. For example the graph may be colored by an Ising model or by a uniform proper coloring. Another avenue of future study is 
developing assembly algorithms and analyzing their complexity (our results implicitly use inefficient greedy algorithms). Perhaps good algorithms from related areas such as DNA shotgun assembly may be adapted to our setting. 
Thus the study of graph shotgun 
 assembly raises new problems in random graphs, percolation, Ising/Potts models, as well as algorithmic problems regarding random constraint satisfaction problems and the theory of spin glasses.

Except for the case of dense ER random graphs and the DNA shotgun assembly problem, none of the graph shotgun results we present are tight (meaning the lower and upper bounds match). 
We conclude the introduction with a family of examples for which it is easy to derive tight bounds.

\smallskip
\noindent{\bf The labelled full binary tree.} 
Let $\mct_n$ be the full binary tree with $2^n$ leaves and label each vertex uniformly from the letters $\{1,\ldots, q\}$. We are given the  $1$-neighborhoods
$\nei_1(v)$ of the $2^n-2$ vertices that are not leaves or the root (so we see the labels of the vertex, its two children, and its parent, as well as the genealogical orientation). 
\begin{proposition}
Let $\eps>0$. 
If 
\be{
\frac{\log(q)}{n}< \log(2) - \eps,
}
then the probability of identifiability of the labeled binary tree $\mct_n$ from $1$-neighborhoods tends to zero.
If 
\be{
\frac{\log(q)}{n} > \log(2) + \eps,
} 
then the probability of identifiability of the labeled binary tree $\mct_n$ from $1$-neighborhoods tends to one.
\end{proposition}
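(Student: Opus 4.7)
The proof has two halves corresponding to the two thresholds, both governed by the same ``two-match swap'' at the depth-$(n-1)$ level. The relevant combinatorial object is an exchange of the two pairs of leaves beneath two non-sibling depth-$(n-1)$ vertices whose grandparents and parents share labels; such a swap preserves the multiset of $1$-neighborhoods under only two label constraints, which yields the critical threshold at $q \sim 2^n$.

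\textbf{Non-identifiability ($\log q/n < \log 2 - \eps$).} For two non-sibling vertices $p, p'$ at depth $n-1$, let $g, g'$ denote their parents and $\{\ell_1, \ell_2\}, \{\ell_1', \ell_2'\}$ their leaf children. I plan to show that, whenever $L(g) = L(g')$ and $L(p) = L(p')$, the labeling $L'$ obtained from $L$ by exchanging the leaf-label multisets $\{L(\ell_1), L(\ell_2)\}$ and $\{L(\ell_1'), L(\ell_2')\}$ yields the same multiset of $1$-neighborhoods: only $\nei_1(p)$ and $\nei_1(p')$ change, and they simply swap within the multiset. Among the $\Theta(2^{2n-3})$ non-sibling pairs, the expected number of valid swap pairs is $\Theta(2^{2n}/q^2)$, which diverges in this regime. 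I would use a second moment argument (the dominant covariances coming from pairs of swap-pairs sharing a $g$, $p$, $g'$, or $p'$) to produce a valid pair with high probability, and separately rule out the coincidences ($L(\ell_1) = L(\ell_2'), L(\ell_2) = L(\ell_1')$, and the like) of probability $O(q^{-2})$ that would let the swap be realized by an automorphism of $\mct_n$.

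\textbf{Identifiability ($\log q/n > \log 2 + \eps$).} I plan to build an auxiliary graph $H$ on the observed labeled stars by declaring $S \sim_H S'$ iff the center label of $S$ appears as a neighbor label of $S'$ and vice versa. In the true tree, adjacent internal non-root vertices always produce such edges. A spurious $H$-edge between stars of non-adjacent vertices requires two independent label coincidences of probability $O(q^{-2})$; summing over $\Theta(2^{2n})$ candidate pairs gives an expected $O(2^{2n}/q^2) = o(1)$ spurious edges in this regime. Similarly the expected number of coincident stars is $O(2^{2n}/q^4) = o(1)$. Therefore with high probability $H$ is isomorphic to the restriction of the labeled $\mct_n$ to its internal non-root vertices, which is a forest of two components, each rooted at its unique degree-$2$ vertex (a depth-$1$ vertex of $\mct_n$). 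One then reads off the root's label as the common parent label in the two depth-$1$ stars, the leaf labels from the non-$H$-adjacent labels in the depth-$(n-1)$ stars, and the remaining labels directly from the star centers, yielding reconstruction unique up to an automorphism of $\mct_n$.

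\textbf{Main obstacle.} The most delicate piece is the second moment calculation for the lower bound, which requires careful enumeration of covariance contributions from overlapping swap-pairs and separate absorption of the small-probability events making a candidate swap isomorphism-trivial. In contrast, the upper bound reduces to clean first-moment estimates once the auxiliary graph $H$ has been set up.
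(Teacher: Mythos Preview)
Your plan is correct and rests on the same swap-at-the-cherries idea as the paper's proof, but both halves can be executed more cheaply than you propose.

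For the lower bound, the paper avoids the covariance enumeration you flag as the main obstacle by restricting attention to \emph{odd-numbered} depth-$(n-1)$ vertices (numbered left to right). These have pairwise disjoint $1$-neighborhoods, so for distinct $(\alpha,\beta)$ and $(\gamma,\delta)$ the match-indicators $X_{\alpha,\beta}$ and $X_{\gamma,\delta}$ are pairwise independent under the uniform labeling; hence $\var B\le \IE B$ and the second moment method goes through immediately. Your ``all non-sibling pairs'' version works too, but you are doing by hand what disjointness gives for free.

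For the upper bound, the paper uses a simpler sufficient event than your auxiliary graph $H$: it just asks that no two \emph{edges} of $\mct_n$ carry the same unordered pair of endpoint labels. There are $O(2^{2n})$ pairs of edges, each matching with probability $O(q^{-2})$, so the expected number of collisions is $O(2^{2n}q^{-2})\to 0$ in your regime; under this event one reconstructs by overlapping common edges across neighborhoods. Your $H$-construction reaches the same conclusion via the same $O(2^{2n}/q^2)$ first-moment bound, but routes it through a weaker matching criterion (mutual ``center-in-neighbor'' inclusion) and then has to separately control coincident stars and read off the root and leaf labels. Nothing in your argument is wrong, but the edge-uniqueness criterion collapses all of that into a single union bound.
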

\begin{proof}
To prove the first assertion, note that if there are two vertex disjoint edges between levels $n-2$ and $n-1$ of the tree having endpoints with
identical labels, then with positive probability reconstruction is impossible since we can
switch the cherries below these edges (which have different labels with positive probability) 
and obtain a distinct labeling of the tree with the same neighborhoods; c.f., Lemma~\ref{lemblock} along with discussion around Figures~\ref{figerblock1} and~\ref{figerblock2}. Thus we lower bound the probability of 
this event using the second moment method. 

Actually it's enough to consider a set $\mathcal{N}^{\textrm{o}}$ of $2^{n-2}$ neighborhoods of vertices at level $n-1$ that are vertex-disjoint. 
Now, let $B=B_{n,q}=\sum_{\alpha\not=\beta} X_{\alpha,\beta}$, where the sum is over all pairs of neighborhoods $(\alpha, \beta)$ with $\alpha,\beta\in\mathcal{N}^{\textrm{o}}$,
and $X_{\alpha, \beta}$ is the indicator that for the central vertices of  $\alpha$ and $\beta$ have the same label,  the parent vertices have the same label (possibly different from the central vertices), and the two pairs of leaves have different labels (as sets). 
We compute
\be{
\IE B \geq  2^{2(n-3)} (1/q)^2 (1-2/q^2).
} 
A key fact used here and below is that 
\emph{if the labels are chosen uniformly}, then
for two pairs of neighborhoods $(\alpha, \beta)\not=(\gamma, \delta)$,
$X_{\alpha, \beta}$ and $X_{\gamma, \delta}$ are \emph{independent}. 
Note that this independence does not hold for general
distributions and neighborhoods, since $X_{\alpha,\beta}=1$
may change the probability that $X_{\alpha, \delta}=1$ for $\delta\not=\beta$.
Thus we find
\be{
\var B = \sum_{\alpha\not=\beta} \var(X_{\alpha, \beta}) \leq \IE B,
}
and the first claim of the proposition follows by the second moment method.

For the second part of the claim, it's clear that if no two edges have the same labels, then we can piece together the tree 
from the neighborhoods
by overlapping distinct edges. The mean of the number of pairs of edges with the same labels is bounded above by
\be{
2^{2n+2}(1/q)^2,
}
which tends to zero under the hypothesis of the second statement of the proposition and
so the result follows.
\end{proof}

\subsection{Follow Up Work} \label{subsec:follow}
Since posting this article to arXiv, a number of groups have made significant progress on 
Conjecture~\ref{conj:jigsaw}. 
\cite{Bordenave2016} and \cite{Nenadov2017} independently show that the puzzle can be \emph{uniquely} assembled (meaning each piece is put in its exact location; this is stronger than identifiability) for $q\geq n^{1+\eps}$ for any $\eps>0$, and \cite{Martinsson2016}, \cite{Martinsson2017} sharpens this result to $q=\omega(n)$,
showing that identifiability is possible for $q\geq (2+\eps) n$, 
and that identifiability is impossible for $q\leq \frac{2}{\sqrt{e}} n$, as well as providing additional properties about the number and type of solutions; a similar but weaker statement was independently shown in \cite{Balister2017}.
In a different line of subsequent work, \cite{Mossel2015} provide tight asymptotic bounds on the radius of identifiability for random regular graphs with fixed degree
as the number of vertices tends to infinity. 

\section{Combinatorial and sampling results}\label{seccombsamp}

We introduce two concepts that can be used to determine identifiability: blocking configurations and uniqueness of
overlaps. 
For concreteness, specialize to the case where for each vertex $v$, $\nei_r(v)$ is the labeled subgraph
induced by the vertices at distance no greater than $r$ from each vertex.

\subsection{Blocking configurations}
A blocking configuration is a neighborhood structure or pattern such that if it appears then identifiability is impossible. 
For a given example, 
there can be a number of different blocking configurations, though that described in
 Lemma~\ref{lemblock} below is most likely in our examples. 
In random models, we use blocking configurations to get upper bounds on the asymptotic neighborhood size to ensure non-identifiability:
if the neighborhoods grow too slowly, then the chance that a blocking configuration appears tends to one and identifiability is impossible
(or the probability is bounded away from zero and so identifiability isn't assured).

For $t>s>0$ and vertex $v$ of a graph $\gr$, define the sphere 
(or shell) $\sph(v;s,t)$ to be the subgraph formed
by removing all isolated vertices from the subgraph of~$\gr$ induced by vertices~$u$ with $s\leq d(u,v) \leq t$.

\begin{lemma}\label{lemblock}
If $\gr$ is such that there is an $r>0$ and 
vertices $v,w$ such that
\begin{itemize}
\item[$(i)$] $\sph(v;1,2r)=\sph(w;1,2r)$,
\item[$(ii)$] $\dist(v,w) > 2r$, and 
\item[$(iii)$] the graph obtained by switching $\nei_1(v)$ and $\nei_1(w)$ in $\gr$ is not isomorphic to $\gr$,
\end{itemize}
then identifiability from $r$-neighborhoods is impossible. 
\end{lemma}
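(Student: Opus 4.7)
\medskip

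\noindent\textbf{Proof proposal.} The plan is to exhibit a graph $\gr'$ with the same multiset of $r$-neighborhoods as $\gr$ but with $\gr'\not\cong\gr$; this will immediately prevent identifiability. I would construct $\gr'$ by ``switching'' the 1-neighborhoods of $v$ and $w$: delete every edge of $\gr$ incident to $v$ or to $w$, and then reconnect $v$ to the former $\gr$-neighbors of $w$ and $w$ to the former $\gr$-neighbors of $v$ (swapping labels on $v$ and $w$ if the graph is vertex-labeled). Hypothesis (iii) is precisely the statement that this $\gr'$ is not isomorphic to $\gr$.

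The main task is then to verify that $\{\nei_r^{\gr}(u):u\in V\}=\{\nei_r^{\gr'}(u):u\in V\}$ as multisets of rooted labeled graphs up to isomorphism. The natural candidate bijection is the map $\phi:V(\gr)\to V(\gr')$ that swaps $v\leftrightarrow w$ and is the identity elsewhere; I would argue $\nei_r^{\gr}(u)\cong \nei_r^{\gr'}(\phi(u))$ as rooted labeled graphs for every $u$. The key structural observation driving this is that condition (i) means the labeled subgraph $\sph(v;1,2r)=\sph(w;1,2r)$ sitting in $\gr$ is entirely preserved in $\gr'$ (since the switched edges are incident to $v$ or $w$, which by condition (ii) lie outside this common sphere), while the $\gr'$-neighbors of $v$ coincide with the $\gr$-neighbors of $w$ and vice versa.

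I would check the claim by cases on the location of $u$. When $d_{\gr}(u,\{v,w\})>r$ and $u\notin\{v,w\}$, the ball $B_r^{\gr}(u)$ avoids all switched edges, so $\nei_r^{\gr}(u)=\nei_r^{\gr'}(u)$ outright. When $u=v$ (symmetrically $u=w$), one uses (i) to identify $\nei_r^{\gr'}(v)$ with $\nei_r^{\gr}(w)$: because the $\gr'$-neighbors of $v$ are the $\gr$-neighbors of $w$ and the induced subgraph on $\sph(w;1,2r)$ agrees in $\gr$ and $\gr'$, distances from $v$ in $\gr'$ into the sphere match distances from $w$ in $\gr$, and the ball is recovered exactly. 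The intermediate case, where $u$ is at distance $\le r$ from one of $v,w$ but $u\notin\{v,w\}$, uses (i) and (ii) together: (ii) guarantees that the ball $B_r^{\gr}(u)$ cannot meet both $v$ and $w$, and (i) lets us replace any edge at $v$ appearing in $B_r^{\gr}(u)$ by the corresponding edge at $w$ in $B_r^{\gr'}(u)$ (or vice versa) under $\phi$ without changing the induced labeled subgraph rooted at $u$.

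The step I expect to be the main obstacle is precisely this intermediate case: one must confirm that distances inside the $r$-ball around $u$ are not shortened or lengthened by the switch, i.e.\ that $\phi$ restricted to $B_r^{\gr}(u)$ is a distance-preserving bijection onto $B_r^{\gr'}(u)$. Condition (ii) is what prevents a shortcut through the other of $\{v,w\}$ from arising, and condition (i) is what guarantees that whenever an edge through $v$ is ``rewired'' to pass through $w$ (or vice versa) the same partner-vertex exists with the same label in the common sphere. Once this bookkeeping is carried out, the multisets of $r$-neighborhoods of $\gr$ and $\gr'$ coincide, and non-identifiability follows from $\gr\not\cong\gr'$.
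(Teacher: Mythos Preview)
Your construction of $\gr'$ is the problem. If you delete all edges at $v$ and $w$, reconnect $v$ to $N_{\gr}(w)$ and $w$ to $N_{\gr}(v)$, \emph{and} swap the labels of $v$ and $w$, then the very bijection $\phi$ you introduce (swap $v\leftrightarrow w$, identity elsewhere) is a labeled graph isomorphism $\gr\to\gr'$: it carries each $\gr$-edge to a $\gr'$-edge and preserves all labels. Hence $\gr'\cong\gr$ always, condition~(iii) is never satisfied, and the lemma becomes vacuous. (Dropping the label swap does not rescue the construction either: in the unlabeled case $\phi$ is still an isomorphism $\gr\to\gr'$, yet the paper applies the lemma to unlabeled \ER\ graphs, so that reading cannot be the intended one.)

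The paper's ``switch'' is genuinely different and uses condition~(i) in an essential way. The isomorphism $\sigma:\sph(v;1,2r)\to\sph(w;1,2r)$ identifies the \emph{anchoring} neighbors of $v$ (those lying in the sphere, i.e.\ having a further neighbor at distance~$2$) with the anchoring neighbors of $w$. The switch detaches $\nei_1(v)$---that is, $v$ together with its label and its \emph{non}-anchoring (``portable'') neighbors---from $\sph(v;1,2r)$ and reattaches it to $\sph(w;1,2r)$ via $\sigma$, and symmetrically for $\nei_1(w)$. This can change the isomorphism type: in the \ER\ blocking configuration, it turns ``plain line $+$ doubly-pronged line'' into ``two singly-pronged lines''. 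The matching of $r$-neighborhoods then runs through $\sigma$, not through $\phi$: for $x$ with $2\le \dist(x,v)\le r$, or $\dist(x,v)=1$ with a neighbor at distance~$2$, one pairs $\nei_r(x)$ with $\nei_r'(\sigma(x))$; for $x\in\{v,w\}$ or a portable neighbor, $\nei_r(x)=\nei_r'(x)$. Your case split by distance to $\{v,w\}$ is the right skeleton, but it must be carried out with this $\gr'$ and this pairing.
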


\begin{proof}
We claim that there are at least two non-isomorphic labeled graphs having the same $r$-neighborhoods as $\gr$:
the true one, $\gr$, and one where $\nei_1(v)$ and $\nei_1(w)$ are switched, denoted by $\gr'$. 
Condition $(i)$ ensures that such a switch is possible 
since the number of vertices at distance one connecting to vertices at distance two and their labels agree for
 $v$ and $w$. Condition~$(iii)$ ensures that $\gr$ and $\gr'$ are not isomorphic (and note in particular that this implies $\nei_1(v)\not=\nei_1(w)$). Denote by $\nei_r'$ the $r$-neighborhoods generated by $\gr'$.

We only need to show that $\gr$ and $\gr'$ generate the same $r$-neighborhoods (including multiplicities). 
From $(ii)$, there is no vertex having both $v$ and $w$ in its $\gr$ $r$-neighborhood.
 Thus we can split vertices into two groups:
 those being within distance $r$ of exactly one of $v$ or $w$ in $\gr$, and those having distance greater than $r$
 from both of $v$ and $w$. For any vertex $x$ in the latter group, 
 the differences in switching $\nei_1(v)$ and $\nei_1(w)$
 are not reflected by (potential) neighbors of $v$ and $w$ that 
 are at distance $r$ from $x$ (since the labels and positions of such vertices have to match), and so $\nei_r(x)=\nei_r'(x)$. 
 
 For the group of vertices within distance $r$ (in $\gr$) of one of $v$ or $w$, Condition $(i)$ implies there is an obvious matching of each vertex $x$ that satisfies either
 \begin{itemize}
 \item $2\leq \dist(x,v)\leq r$ (distance in $\gr$) or,
 \item $\dist(x,v)=1$ and $x$ has a neighbor at distance two from $v$,
 \end{itemize}
 to one having the same distance from $w$ and identical label. Moreover, under this matching, $\nei_r(x)=\nei_r'(y)$ and $\nei_r'(x)=\nei_r(y)$. 
 Finally, by $(i)$, for $x=v, w$ or a neighbor of $v$ or $w$ with no neighbors at distance $2$ from $v$ or $w$, $\nei_r(x)=\nei_r'(x)$. 
Thus $\gr$ and $\gr'$ generate the same $r$-neighborhoods.
\end{proof}

\begin{remark}
To get a better sense of the lemma, it may help to take a look at the discussion around Figures~\ref{figerblock1} and~\ref{figerblock2}. We also stress that Condition~$(iii)$ implies that 
$\nei_1(v)\not=\nei_1(w)$.
\end{remark}

\begin{remark}
Condition $(iii)$ seems a bit unnatural and possibly hard to verify. Indeed,
it is difficult to check in situations where the graph $G$ has many symmetries 
since the graph isomorphism problem is computationally difficult.
However, such symmetry is rare in random graphs and so in our applications of the lemma, Condition $(iii)$ is easy to verify. 
We also note that the condition is reasonable to impose given the difficulty of the 
``reconstruction conjecture" that has been open for more than 50 years. 
\end{remark}

\subsection{Uniqueness of overlaps}

The next result formalizes the intuition that if all of the neighborhoods of a certain size are unique, 
then slightly larger neighborhoods
are enough to ensure identifiability. 
In random models, we use uniqueness of overlaps to get lower bounds on the asymptotic neighborhood
size to ensure identifiability. If the neighborhoods grow quickly enough, then the chance that
all neighborhoods of a slightly smaller size are unique tends to one and identifiability is ensured.

\begin{lemma}\label{lemoverlap}
If $\nei_{r-1}(v)\not=\nei_{r-1}(w)$ for all vertices $v\not=w$,
then there is an algorithm for recovering the graph from $r$-neighborhoods.
\end{lemma}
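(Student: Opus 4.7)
The plan is to give an explicit polynomial-time reconstruction algorithm whose correctness falls out of the uniqueness hypothesis on $(r-1)$-neighborhoods. The enabling observation is that from the single piece of data $\nei_r(v)$, one can already read off $\nei_{r-1}(u)$, as a labeled rooted subgraph, for \emph{every} vertex $u$ at $\gr$-distance at most $1$ from $v$ (in particular for $u=v$). To see this, fix such a $u$ and any $w$ with $\dist(u,w)\leq r-1$ in $\gr$: the triangle inequality gives $\dist(v,w)\leq r$, so $w\in \nei_r(v)$, and moreover every vertex along a shortest $u$-to-$w$ path is within $r-1$ of $u$ and hence within $r$ of $v$, so the shortest path lies entirely inside $\nei_r(v)$. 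Thus induced-subgraph distances in $\nei_r(v)$ agree with $\gr$-distances on the whole $(r-1)$-ball around $u$, and the extracted labeled rooted subgraph is exactly $\nei_{r-1}(u)$.

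The algorithm then has two phases. In a preprocessing phase, iterate over the $N$ input neighborhoods $\nei_r(v_1),\ldots,\nei_r(v_N)$, extract $\nei_{r-1}(v_i)$ from each $\nei_r(v_i)$ as above, and build a table keyed by the resulting labeled rooted graph and valued by the index $i$. The uniqueness hypothesis guarantees that the keys are pairwise distinct, so this table is a bijection between indices and their $(r-1)$-neighborhoods. In the reconstruction phase, for each index $i$ and each vertex $u$ adjacent to the root of $\nei_r(v_i)$, extract $\nei_{r-1}(u)$ from inside $\nei_r(v_i)$, query the table to find the unique index $j$ with $\nei_{r-1}(v_j)=\nei_{r-1}(u)$, and record the edge $\{i,j\}$ (together with its label, if edges are labeled) in the reconstructed graph. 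Vertex labels are read directly from the root of each $\nei_r(v_i)$. Each neighborhood is processed in time polynomial in its size, so the total running time is polynomial in $N$ and in the maximum neighborhood size.

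The main obstacle I anticipate is precisely the enabling observation: without the triangle-inequality argument, one could a priori worry that a shortest $u$-to-$w$ path leaves $\nei_r(v)$, so that the subgraph extracted from $\nei_r(v)$ around $u$ at radius $r-1$ is only a \emph{subgraph} of the true $\nei_{r-1}(u)$ and therefore might fail to match any entry of the lookup table. Verifying that the extraction is exact is what makes the lookup step unambiguous; everything else is routine bookkeeping.
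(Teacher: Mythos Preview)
Your proposal is correct and follows essentially the same approach as the paper: both exploit the observation that for any neighbor $u$ of $v$, the labeled rooted subgraph $\nei_{r-1}(u)$ sits inside $\nei_r(v)$ and can therefore be matched, via the uniqueness hypothesis, against the $(r-1)$-neighborhoods extracted from the other inputs. Your organization (global lookup table rather than a sequential BFS-style overlap) is a minor repackaging, and your explicit triangle-inequality justification that the extracted ball around $u$ is \emph{exactly} $\nei_{r-1}(u)$---not merely a subgraph---is a welcome clarification that the paper leaves implicit.
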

\begin{proof}
We can sequentially build the network by overlapping neighborhoods of radius $r-1$. Start with some $r$-neighborhood $\nei_r(v)$ and note that the $(r-1)$-neighborhood 
of each neighbor of $v$ is contained in $\nei_r(v)$ and these are all unique by assumption. Thus for each vertex $w\not=v$, we 
examine the $(r-1)$-neighborhoods of neighbors of $w$ and overlap any of these matching the $(r-1)$-neighborhoods of neighbors of $v$.
Repeating this process for each neighbor of $v$ and then continuing for the vertices at distance $2, 3,\ldots$ from $v$, it's clear that 
the process terminates when a connected component is recovered. 
\end{proof}

\begin{remark}
The proof of the lemma is simple because we assume we see not only $\nei_r(v)$, but also which vertex in the neighborhood 
is the ``center" (namely, $v$). We do not investigate here how to relax this condition to the situation where the center $v$ is not given. 
\end{remark}

\subsection{Sampling}
In the regime where we have uniqueness of $(r-1)$-neighborhoods, then once all neighborhoods have been sampled, reconstruction is trivial. Bounds on the probability of reconstruction then easily come from understanding the number of samples needed to see all neighborhoods, which is just the  coupon collector problem.  In this short subsection, we spell out the details around this statement.
Let $\mrec(N,r,\eps)$ be the minimum number of samples of the $r$-neighborhoods of a graph $\gr_N$ on $N$ vertices so that
the chance the graph can be reconstructed from the samples is least $1-\eps$. 

\begin{lemma}\label{lemcoupon}
If for some $r$, $\nei_{r-1}(v)\not=\nei_{r-1}(w)$ for all vertices $v\not=w$, then
\be{
\mrec(N,r,\eps) \leq \lceil N\log(N) -N\log\eps \rceil.
}
\end{lemma}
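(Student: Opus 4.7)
The plan is to reduce the problem directly to the classical coupon collector estimate, using the preceding Lemma~\ref{lemoverlap} as the bridge between ``having seen all neighborhoods'' and ``being able to reconstruct the graph.''

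First I would observe that under the uniqueness hypothesis on $(r-1)$-neighborhoods, Lemma~\ref{lemoverlap} says that the full collection $\{\nei_r(v): v \in V(\gr)\}$ suffices to reconstruct $\gr$. Hence it is enough to find an $m$ such that $m$ i.i.d.\ uniform samples with replacement from this size-$N$ collection cover all $N$ distinct neighborhoods with probability at least $1-\eps$; we then take $\mrec(N,r,\eps)\leq m$.

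Next I would apply the standard union bound for coupon collecting: the probability that some specific neighborhood $\nei_r(v)$ is missed after $m$ draws equals $(1-1/N)^m$, so by a union bound over the $N$ vertices the probability that some neighborhood is missing is at most
\be{
N\left(1-\tfrac{1}{N}\right)^m \;\leq\; N e^{-m/N}.
}
Setting $N e^{-m/N}\leq \eps$ and solving gives $m\geq N\log N - N\log\eps$. Taking $m=\lceil N\log N - N\log\eps\rceil$ therefore ensures all neighborhoods are collected with probability at least $1-\eps$, which by the previous paragraph implies reconstruction with probability at least $1-\eps$, yielding the claimed bound.

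There is no real obstacle here: the content of the lemma is essentially the observation that, once overlaps are unique, sampling neighborhoods is no harder than coupon collecting on $N$ coupons. The only point worth flagging is that we are using sampling \emph{with replacement} from the fixed neighborhood multiset (as stipulated in Q2), which is exactly the setting in which the inequality $(1-1/N)^m\leq e^{-m/N}$ applies cleanly.
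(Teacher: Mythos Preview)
Your proposal is correct and is essentially identical to the paper's own proof: both invoke Lemma~\ref{lemoverlap} to reduce reconstruction to having collected every neighborhood, then apply the union bound $\IP(\text{miss some coupon after }M\text{ draws})\leq N(1-1/N)^M\leq N e^{-M/N}$ and set $M=\lceil N\log N - N\log\eps\rceil$. There is nothing to add.
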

\begin{proof}
The proof of Lemma~\ref{lemoverlap} implies that it's enough to see all of the neighborhoods, possibly in multiplicities,
since then we can build the network by overlapping the $(r-1)$-neighborhoods of neighbors of the sampled vertex.
The bound in the lemma now easily follows from coupon collecting: if $T$ is the number of samples with replacement required to collect
$N$ distinct coupons, then a union bound implies that for integer $M>0$,
\be{
\IP(T> M)\leq N (1-1/N)^{M}\leq N e^{-M/N}.
}
Now setting $M=\lceil N\log(N) -N \log \eps \rceil$, we find
\bes{
\IP(&\mbox{Can't reconstruct with $M$ samples}) \leq \IP(T>M)\leq \eps,
}
and so $\mrec(N,r,\eps) \leq M$.
\end{proof}

Since there is no hope of reconstruction if there is some vertex that doesn't appear in any of the sampled neighborhoods,  we can also use coupon collecting to get a lower bound 
on $\mrec(N,r, \eps)$ in the general case. Let $|\nei_r(v)|$ denote the number of vertices in $\nei_r(v)$.
\begin{lemma}\label{lemcoupon2}
If the positive integer $M$ is such that
\be{
\frac{\left(\sum_{i=1}^N \left(1-\frac{|\nei_r(v_i)|}{N}\right)^M\right)^2}{\sum_{i,j=1}^N \left(1-\frac{|\nei_r(v_i)\cup\nei_r(v_j)|}{N}\right)^M}\geq \eps,
}
then $\mrec(N,r, \eps)\geq M$.
\end{lemma}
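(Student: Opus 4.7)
The plan is to apply the second moment method to the number of vertices missed by the sample, using the observation (made in the preamble of this subsection) that reconstruction is impossible whenever some vertex fails to appear in any sampled neighborhood.

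Concretely, let $w_1,\ldots,w_M$ be i.i.d.\ uniform samples from $\{v_1,\ldots,v_N\}$, let $X_i$ be the indicator that $v_i \notin \nei_r(w_k)$ for every $k=1,\ldots,M$, and set $X = \sum_{i=1}^N X_i$. Because the neighborhood relation is symmetric (distance in $\gr$ is symmetric, so $v_i \in \nei_r(w)$ iff $w \in \nei_r(v_i)$), the probability that a single uniform sample fails to cover $v_i$ equals $1-|\nei_r(v_i)|/N$, and by independence across the $M$ samples
\[
\IE X_i = \left(1-\frac{|\nei_r(v_i)|}{N}\right)^M.
\]
The same symmetry gives that $X_i X_j = 1$ exactly when no $w_k$ lies in $\nei_r(v_i) \cup \nei_r(v_j)$, so
\[
\IE[X_i X_j] = \left(1-\frac{|\nei_r(v_i)\cup\nei_r(v_j)|}{N}\right)^M.
\]
Summing over $i$ (respectively over $i,j$) shows that $(\IE X)^2$ and $\IE X^2$ are precisely the numerator and denominator appearing in the hypothesis of the lemma.

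The Paley--Zygmund inequality then yields
\[
\IP(X>0) \geq \frac{(\IE X)^2}{\IE X^2} \geq \eps,
\]
so with probability at least $\eps$ some vertex of $\gr$ is never covered, and reconstruction from those $M$ samples is impossible. Hence $M$ samples succeed in reconstructing $\gr$ with probability at most $1-\eps$, which forces $\mrec(N,r,\eps) > M$; taking $M$ to be the largest integer satisfying the hypothesis gives the stated lower bound. (The symbol $\lfloor x \rfloor$ in the statement appears to be a typographical slip for $M$ or $M+1$.)

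There is essentially no serious obstacle here: the only point that warrants verification is the symmetry $v_i \in \nei_r(w) \iff w \in \nei_r(v_i)$, which is immediate under the convention that $\nei_r(v)$ consists of all vertices at graph distance at most $r$ from $v$. Everything else is a direct second moment computation followed by one application of Paley--Zygmund.
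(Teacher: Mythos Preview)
Your proof is correct and follows essentially the same approach as the paper: both apply the second moment (Paley--Zygmund) inequality to the count of vertices not covered by any sampled neighborhood, computing $\IE W_M$ and $\IE W_M^2$ exactly as you do. Your explicit use of the symmetry $v_i\in\nei_r(w)\iff w\in\nei_r(v_i)$ spells out a step the paper leaves implicit, and your remark that $\lfloor x\rfloor$ is a typographical slip is also correct.
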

\begin{proof}
Let $W_M$ be the number of vertices that have not appeared in some neighborhood in a sample of size $M$. If $W_M>0$, then we
can't reconstruct with $M$ samples and so 
by the second moment method,
\besn{\label{7}
\IP(&\mbox{Can't reconstruct with $M$ samples})  \geq \IP(W_M>0) \geq \frac{(\IE W_M)^2}{\IE W_M^2},
}
and for any $M$ such that the right-most side of~\eq{7} is greater than $\eps$, the chance of reconstruction
is at most $1-\eps$ which implies $M\leq \mrec(N,r,\eps)$. The result now follows by computing
\ba{
\IE W_M &= \sum_{i=1}^N \left(1-\frac{|\nei_r(v_i)|}{N}\right)^M, \\
\IE W_M^2 &= \sum_{i,j=1}^N \left(1-\frac{|\nei_r(v_i)\cup\nei_r(v_j)|}{N}\right)^M. \qedhere
}
\end{proof}

\section{Labeled lattice models}\label{seclattice}

Recall the setting of Example~\ref{ex1}:
$\gr$ is the $d\geq 2$ dimensional $n$-box $\bz_n^d$ with i.i.d.\ vertex labels and 
neighborhoods the $r$-boxes contained in $\bz_n^d$; note that for these neighborhoods the position of
 $v$ can be inferred from the neighborhood since it's in the center (recall there are only $(n-r+1)^d$ neighborhoods rather than $n^d$).
 Our results for i.i.d.\ uniform labeling are different than the general i.i.d.\ case.

\subsection{Uniform labels}
Assume the vertices of $\bz_n^d$ are labeled uniformly from $ q\geq 2$ labels. 
Our first result uses blocking configurations to obtain an upper bound on the growth of $r$ to ensure 
a positive chance of 
non-identifiability. 
\begin{proposition}\label{proplatticeblock}
Given the $r$-neighborhoods of $\bz_n^d$ with vertex labels i.i.d.\ uniform from $q$ labels,
 the following holds as
$n \to \infty$. 
\begin{itemize}
\item if $(n/r)^{2d} q^{-(2r)^d}\to \infty$, then the probability of identifiability tends to zero, and
\item if $\liminf\limits_{n\to\infty} \left[(n/r)^{2d} q^{-(2r)^d}\right]>0$, then the probability of identifiability is strictly less than one.
\end{itemize}
\end{proposition}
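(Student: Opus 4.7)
The plan is to apply the blocking-configuration criterion (Lemma~\ref{lemblock}) via a first- and second-moment computation on a carefully chosen family of candidate swap pairs.

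I would first fix a sub-grid $\{v_1,\ldots,v_M\}\subset \bz_n^d$ whose points are spaced so that the shells $\sph(v_i;1,2r)$ are pairwise vertex-disjoint and any two centers are at distance strictly greater than $2r$, making condition~(ii) of Lemma~\ref{lemblock} automatic. A packing argument gives $M=\Theta((n/r)^d)$. Let $X_{ij}=\mathbf{1}\{\sph(v_i;1,2r)=\sph(v_j;1,2r)\}$ and $B=\sum_{i<j}X_{ij}$. Because the labels are i.i.d.\ uniform on $q$ symbols and the shells are disjoint, $\IE X_{ij}=q^{-s}$, where $s:=|\sph(v_1;1,2r)|$ is of order $(2r)^d$ in the lattice geometry considered here, so $\IE B$ is proportional to $(n/r)^{2d}q^{-(2r)^d}$. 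Condition~(iii) of Lemma~\ref{lemblock} is almost immediate given $X_{ij}=1$: the swap produces a non-isomorphic labeled graph as soon as the labels at $v_i$ and $v_j$ differ (conditional probability $1-1/q$, since these labels are independent of the shells) and the underlying random labeling admits no non-trivial label-preserving automorphism of $\bz_n^d$, which holds with probability tending to $1$ by a union bound over the polynomially many lattice symmetries.

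Next, I would control $\var B$ via disjointness. Pairs $X_{ij},X_{k\ell}$ with $\{i,j\}\cap\{k,\ell\}=\emptyset$ are independent by construction of the sub-grid, so the only non-trivial correlations come from the $O(M^3)$ pairs sharing exactly one index, each contributing $q^{-2s}$ (the shells of the three involved centers must all coincide, which factorises into two independent matching events). The resulting bound is $\IE B^2 = \IE B + (\IE B)^2(1 + O(1/M))$, so $\var B/(\IE B)^2 = O(1/\IE B) + O(1/M)$. Under the first hypothesis this is $o(1)$, and Paley--Zygmund gives $\IP(B\geq 1)\to 1$; under the second hypothesis it is $O(1)$, and Paley--Zygmund gives $\IP(B\geq 1)\geq c>0$. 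In both cases the conclusion of the proposition follows from Lemma~\ref{lemblock} applied to any pair with $X_{ij}=1$ that also satisfies condition~(iii).

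The main obstacle is the clean joint treatment of condition~(iii) and the shell-matching event. While it is intuitively clear that a uniform random labeling almost surely has no non-trivial label-preserving automorphism, formalising this while preserving the independence used above requires some care --- in particular, handling lattice symmetries (translations, reflections, rotations) and boundary vertices of $\bz_n^d$ whose shells are truncated. These refinements affect only constants and leave the exponents in the stated bounds intact.
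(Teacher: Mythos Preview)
Your proposal is correct and follows essentially the same route as the paper's proof: a spaced sub-grid of centers, a second-moment computation on matching indicators, and a union bound over the $2^d$ lattice symmetries for condition~(iii). The paper additionally observes that for \emph{uniform} labels the indicators are in fact pairwise independent (your shared-index covariance is exactly zero, since $\IE[X_{ij}X_{ik}]=q^{-2s}=\IE[X_{ij}]\IE[X_{ik}]$), which yields $\var B\le \IE B$ directly without the $O(1/M)$ correction; it also works with $(2r-1)$-cubes rather than graph-distance shells, since the neighborhoods here are cubes and Lemma~\ref{lemblock} as stated is for ball neighborhoods.
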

\begin{proof}
Let $\Gamma'=\Gamma'_{n,d, 2r-1}$ be the set of non-overlapping 
neighborhoods of the form 
$x + [0,2r-1]^d$ where all of the coordinates of $x$ are $0$ modulo $2r$
and let 
\be{
B=B_{n, d,r,q}=\sum_{\alpha, \beta \in \Gamma'}
X_{\alpha, \beta},
}
where $X_{\alpha, \beta}$ is the indicator of the event that the labels of $\alpha$ and $\beta$ are equal except for the labels of the center vertices which must be different.
If $B>0$, then identifiability is impossible since 
if $X_{\alpha,\beta}>0$ for some $\alpha,\beta\in \Gamma'$, then identifiability is impossible since, similar to Lemma~\ref{lemblock},  
there are at least two ways to construct a consistent layout of neighborhoods, by switching the
labels of the center vertices.
Note further that the probability that there is an isomorphism of the graph excluding these two neighborhoods is at most $2^d \times (1/q)^{n^d/2-1}$ (since there are $2^d$ possible rotations and each site has to match the label of one other site). 
 
To lower bound $\IP(B>0)$, we use the second moment method and compute
$\IE B$ and $\IE B^2$.
Assume that $n\gg r$ (without loss under the hypotheses of the proposition).
It's easy to see that $|\Gamma'|= \Theta((n/2r)^d)$
and $\IE X_{\alpha, \beta}= (1/q)^{(2r-1)^d-1}(1-1/q)$, which imply that 
\ben{\label{latticeblock1}
\IE B \geq \Theta((n/2r)^{2d}) (1/q)^{(2r-1)^d-1}(1-1/q).
}
Now, $B$ is concentrated since the $X_{(\alpha,\beta)}$ are pairwise independent; due to the labels being chosen uniformly. 
Thus
\[
\var(B) =\sum_{\alpha, \beta \in \Gamma'} \var(X_{\alpha, \beta}) \leq \IE B.
\]
The result follows by the second moment method. 
\end{proof}

We can use uniqueness of overlaps as in Lemma~\ref{lemoverlap} to find a regime where asymptotic reconstruction is assured.
\begin{proposition}\label{proplatticeover}
If $n^{2d}q^{-(r-1)^d}\to 0$ as $n\to\infty$, then the probability of identifiability (of $\bz_n^d$ with i.i.d.\ uniform on $q$ vertex
labels) from $r$-neighborhoods tends to one.
\end{proposition}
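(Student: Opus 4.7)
My plan is to reduce via Lemma~\ref{lemoverlap} to showing that, under the hypothesis, with probability tending to one every pair of $(r-1)$-cubes in $\bz_n^d$ has distinct labelings. Since there are at most $n^d$ such cubes, I would union-bound over the at most $n^{2d}$ ordered pairs of distinct $(r-1)$-cubes.

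The main step is to show that for any two $(r-1)$-cubes $\alpha$ and $\beta$ at distinct positions,
\[
\IP(\alpha \text{ and } \beta \text{ have identical labelings}) = q^{-(r-1)^d},
\]
\emph{regardless of whether they overlap}. Writing $\beta = \alpha + e$ for some nonzero integer shift $e$, matching labels is equivalent to the $(r-1)^d$ equality constraints $L(p) = L(p+e)$ for $p \in \alpha$, where $L$ denotes the random labeling. Consider the graph $H$ on vertex set $\alpha \cup \beta$ with edges $\{p, p+e\}$ for $p \in \alpha$. Within each $e$-orbit $\{p+je : j \in \mathbb{Z}\} \cap (\alpha \cup \beta)$, the edge $\{p+je, p+(j+1)e\}$ is present iff $p+je \in \alpha$, so the orbit contributes a simple path to $H$. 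Hence $H$ is a forest on $|\alpha \cup \beta|$ vertices with exactly $(r-1)^d$ edges, giving $C = |\alpha \cup \beta| - (r-1)^d$ connected components. The number of labelings of $\alpha \cup \beta$ consistent with the constraints is $q^C$, while the total number is $q^{|\alpha \cup \beta|}$, so the matching probability is $q^{C - |\alpha \cup \beta|} = q^{-(r-1)^d}$ as claimed.

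Combining, the expected number of ordered matching pairs is at most $n^{2d} q^{-(r-1)^d}$, which tends to zero under the hypothesis. By Markov's inequality, the probability that any two $(r-1)$-neighborhoods coincide vanishes, and then Lemma~\ref{lemoverlap} yields identifiability from $r$-neighborhoods with probability tending to one.

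The only mildly delicate point is the uniformity in the shift $e$ of the matching probability for overlapping cubes; the forest-on-orbits observation sidesteps any case analysis on $e$ and shows that the probability is the same $q^{-(r-1)^d}$ as in the disjoint case. Everything else is a routine first-moment argument applied to Lemma~\ref{lemoverlap}.
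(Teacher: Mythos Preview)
Your proof is correct and follows essentially the same approach as the paper: a first-moment bound on the number of pairs of $(r-1)$-cubes with identical labels, combined with the key observation that the matching probability is exactly $q^{-(r-1)^d}$ irrespective of overlap, and then an appeal to Lemma~\ref{lemoverlap}. Your forest-on-$e$-orbits argument is just a repackaging of the paper's lexicographic counting of consistent labelings of $\alpha\cup\beta$ (both amount to showing there are $|\alpha\cup\beta|-(r-1)^d$ free choices), and is arguably cleaner since it works uniformly in $d$ and in the sign pattern of the shift $e$.
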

\begin{proof}
Let $Y:=Y_{n,d,r,q}$ be the number of pairs of different $(r-1)$-neighborhoods that have the same labels and
we show that $\IE Y\to 0$ as $n\to\infty$, from which the result follows from a minor variation of Lemma~\ref{lemoverlap}. 

Denote the set of $(r-1)$-neighborhoods of 
$\bz_n^d$  by $\Gamma=\Gamma_{n,d, r-1}$ and for $\alpha,\beta\in\Gamma$,
let $Y_{(\alpha,\beta)}$ be the indicator that $\alpha$ and $\beta$ have the same labels. 
It's obvious that if $\alpha\cap\beta=\emptyset$ (meaning the two neighborhoods share no vertices) then 
$\IE Y_{(\alpha, \beta)}=q^{-(r-1)^d}$, but \emph{since the labels are uniform}, straightforward considerations (see below) 
show that in fact
\begin{equation} \label{eq:lex}
\IE Y_{(\alpha, \beta)}=q^{-(r-1)^d} 
\end{equation}
for all $\alpha \neq \beta$.
Thus we find
\be{
\IE Y=\sum_{\alpha,\beta\in\Gamma,\alpha\not=\beta} \IE Y_{(\alpha,\beta)}=[(n-r)^{2d}-1]q^{-(r-1)^d}.
}
To prove (\ref{eq:lex}) formally assume WLOG that $(x,y) \to (x,y)-(i,j)$ is an injective map from $\beta$ to $\alpha$ where $i,j \geq 0$ 
and at least one of $i$ and $j$ is non-zero.  Then we can label  $\alpha \cup \beta$ according to lexicographic order where
\begin{itemize}
\item
If a site is in $\alpha \setminus \beta$ then we label it arbitrarily.
\item 
If a site~$(x,y)$ is in $\beta$ then we label it by looking at the site
$(x,y)-(i,j)$ which was already labeled.
\end{itemize} 

This defines all labelings of $\alpha \cup \beta$ where $\alpha$ and $\beta$ have the same label so the number of such labelings is 
$q^{|\alpha \setminus \beta|}$ while the total number of labelings of $\alpha \cup \beta$  is 
$q^{|\alpha \cup \beta|}$. The proof follows. 
\end{proof}

Theorem~\ref{thmlatticethreshintro} in the introduction is easily established by combining Propositions~\ref{proplatticeblock} and~\ref{proplatticeover}.

\subsection{Non-uniform labels} If the labels are i.i.d.\ but not uniform, we can 
prove a (weaker) analog of Proposition~\ref{proplatticeover}.
Let $p_i$ denote the chance of label $i$ appearing at a site
and $\ps_j=\sum_{i} p_i^j$ denote the probability that $j$ particular sites have the same label.
\begin{proposition}\label{proplatticeovergen}
If $(nr)^{2d}\ps_2^{(r-1)^d}\to 0$ as $n\to\infty$, then the probability of identifiability (of $\bz_n^d$ with i.i.d.\  vertex
labels) from $r$-neighborhoods tends to one.
\end{proposition}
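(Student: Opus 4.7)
The strategy mirrors the proof of Proposition~\ref{proplatticeover}. Let $Y$ count ordered pairs $(\alpha,\beta)$ of distinct $(r-1)$-cubes of $\bz_n^d$ whose label functions coincide (viewed as functions on $[0,r-2]^d$). By Lemma~\ref{lemoverlap} and Markov's inequality, it suffices to show $\IE Y\to 0$, so I would compute $\IE Y=\sum_{(\alpha,\beta)}\IP(\phi_\alpha=\phi_\beta)$ by splitting into disjoint and overlapping pairs according to the corner displacement $v=w-u$. When $\|v\|_\infty\ge r-1$, the labelings on $\alpha$ and $\beta$ are independent and pointwise matching has probability $\ps_2$, giving $\IP(\phi_\alpha=\phi_\beta)=\ps_2^{(r-1)^d}$; the contribution of these at most $n^{2d}$ pairs is then bounded by $n^{2d}\ps_2^{(r-1)^d}\le(nr)^{2d}\ps_2^{(r-1)^d}\to 0$.

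For the overlap case $0<\|v\|_\infty<r-1$ I would analyze the event $\phi_\alpha=\phi_\beta$ via the \emph{constraint graph} $H$ on vertex set $\alpha\cup\beta$ with edge set $\bigl\{\{u+s,\,u+v+s\}:s\in[0,r-2]^d\bigr\}$, so $H$ has exactly $(r-1)^d$ edges. Each edge is parallel to $\pm v$, so a cycle in $H$ would project to a closed self-avoiding $\pm1$-walk on the line $\mathbb{Z}v$, which does not exist; combined with the fact that each vertex has at most two neighbors ($y\pm v$), this forces $H$ to be a disjoint union of paths. Denoting the path lengths by $k_1,\dots,k_m$, the event $\phi_\alpha=\phi_\beta$ is equivalent to each path being monochromatic, giving $\IP(\phi_\alpha=\phi_\beta)=\prod_c\ps_{k_c}$, and a simple edge count yields $\sum_c(k_c-1)=(r-1)^d$ independently of $v$.

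To bound the product uniformly in $v$, I would use the elementary inequalities $\ps_k=\sum_ip_i^k\le(\max_ip_i)^{k-1}\sum_ip_i=(\max_ip_i)^{k-1}$ and $\max_ip_i\le\sqrt{\ps_2}$, combining to $\ps_k\le\ps_2^{(k-1)/2}$; multiplying over the paths gives $\IP(\phi_\alpha=\phi_\beta)\le\ps_2^{(r-1)^d/2}$. The number of overlapping pairs is at most $n^d\cdot(2r-3)^d\le(2nr)^d$ (choices of corner times choices of displacement), so the overlap contribution is bounded by $2^d\bigl[(nr)^{2d}\ps_2^{(r-1)^d}\bigr]^{1/2}$, which also vanishes under the hypothesis.

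The main obstacle is identifying the right substitute for the uniform per-pair bound in the non-uniform setting: the log-convexity inequality $\ps_k\ge\ps_2^{k-1}$ goes the \emph{wrong} way, so one cannot directly imitate the uniform proof chain-by-chain. Routing the estimate through $\max_ip_i$ sacrifices a factor of $2$ in the exponent, and this loss is precisely the reason for the extra factor $r^{2d}$ in the hypothesis compared to Proposition~\ref{proplatticeover}.
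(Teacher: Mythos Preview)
Your proof is correct and takes essentially the same approach as the paper: both split into disjoint and overlapping pairs, recognize that the matching constraint on an overlapping pair decomposes into monochromatic chains (your paths in the constraint graph $H$ are exactly the objects counted by the paper's $k_j$), and then bound $\ps_k$ by a power of $\ps_2$ to obtain the uniform estimate $\ps_2^{(r-1)^d/2}$ for overlapping pairs. The only cosmetic difference is the inequality used---the paper applies $\ps_j\le\ps_2^{j/2}$ to get $\ps_2^{|\alpha\cup\beta|/2}\le\ps_2^{(r-1)^d/2}$, whereas you route through $\max_ip_i$ to obtain $\ps_k\le\ps_2^{(k-1)/2}$ and hence $\ps_2^{(r-1)^d/2}$ directly; both yield the same final bound.
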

\begin{proof}
As in the proof of Proposition~\ref{proplatticeover}, let $Y$ be the number of $(r-1)$-neighborhoods that have the same 
labels and we show that $\IE Y \to 0$ as $n\to \infty$. 
Similar to the 
proof of Proposition~\ref{proplatticeblock}, denote the set of $(r-1)$-neighborhoods of 
$\bz_n^d$  by $\Gamma=\Gamma_{n,d, r-1}$ and for $\alpha,\beta\in\Gamma$,
let $Y_{(\alpha,\beta)}$ be the indicator that $\alpha$ and $\beta$ have the same labels. 
It's obvious that if $\alpha\cap\beta=\emptyset$ (meaning the two neighborhoods share no vertices) then 
$\IE Y_{(\alpha, \beta)}=\ps_2^{(r-1)^d}$.  If $\alpha\cap\beta\not=\emptyset$, then 
\ben{\label{22}
\IE Y_{(\alpha, \beta)}=\prod_{j\geq 2} \ps_j^{k_j}, 
}
where $j \times k_j$ are the number of sites in the union of $\alpha$ and $\beta$ that need to be matched 
to $j-1$ other sites to ensure $Y_{(\alpha, \beta)}=1$ (c.f., the justification of~\eq{eq:lex} at the end of the proof of Proposition~\ref{proplatticeover}). 
Note that $\sum_{j\geq 2} (j-1) k_j=(r-1)^d$ and that $\sum_{j\geq 2} k_j=|\alpha \cup \beta|-(r-1)^d$,
since this sum is equal to $|\alpha/\beta|$. Using the basic inequality $\ps_j\leq \ps_2^{j/2}$ for $j\geq2$ in~\eq{22}, we find 
\be{
\IE Y_{(\alpha, \beta)} \leq \prod_{j\geq 2} \ps_2^{jk_j/2} = \ps_2^{|\alpha \cup \beta|/2}\leq \ps_2^{(r-1)^d/2};
}
the last inequality is since $|\alpha \cup \beta| \geq (r-1)^d$. Counting the number of overlapping and non-overlapping neighborhoods, we find
\be{
\IE Y \leq n^{2d} \ps_2^{(r-1)^d} +4r^dn^d \ps_2^{(r-1)^d/2}, 
}
from which the result easily follows.
\end{proof}
\begin{remark}
If the labels are uniform, then $\ps_j=q^{-(j-1)}$ and so we can use this exact quantity  (rather than the inequality $\ps_j\leq \ps_2^{j/2}$) in~\eq{22} in the proof of
Proposition~\ref{proplatticeovergen} to recover the sharper Proposition~\ref{proplatticeover}.
\end{remark}
For non-uniform vertex labels, the correlations between the appearance of overlapping blocking sets 
can become significant and so the second moment method of Proposition~\ref{proplatticeblock} breaks down. 
Still we believe that similar results should hold: 

\begin{conjecture} \label{conj:box_threshold2}
Consider a distribution $\pi$ that is fully supported on $\{1,\ldots, q\}$ and the labeling of $\bz_n^d$ by i.i.d.\ labels from $\pi$. For every dimension $d$, 
there exists a constant $c_d(\pi)$ such that for every $\eps > 0$, when $r^d \geq (1+\eps) c_d(\pi) \log n$, the probability of identifiability tends to one as $n\to\infty$, while when $r^d \leq (1-\eps) c_d(\pi) \log n$, the probability of identifiability goes to $0$. 
\end{conjecture}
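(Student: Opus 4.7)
The plan is to follow the template of Theorem~\ref{thmlatticethreshintro} and Propositions~\ref{proplatticeblock}--\ref{proplatticeovergen}: combine an improved blocking argument with Proposition~\ref{proplatticeovergen} to bracket the identifiability transition in the regime $r^d \asymp \log n$, and then attack the sharp constant.

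For the upper direction, Proposition~\ref{proplatticeovergen} already gives identifiability whenever $r^d > (2d/\log(1/\ps_2))\log n$, and I would not alter it; set $c_d^+(\pi) := 2d/\log(1/\ps_2)$.

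For the lower direction I would adapt Proposition~\ref{proplatticeblock}. Taking pairs of disjoint $(2r-1)$-cubes in $\bz_n^d$ whose labelings agree except at the centers, the first moment is $\IE B = \Theta((n/r)^{2d})\ps_2^{(2r-1)^d-1}(1-\ps_2)$, which diverges precisely when $(2r)^d < (d/\log(1/\ps_2))\log n$, giving the candidate $c_d^-(\pi) := d/(2^{d-1}\log(1/\ps_2))$. The second-moment step breaks down in the non-uniform case because $\ps_3 > \ps_2^2$, so the indicators $X_{(\alpha,\beta)}$ are positively correlated whenever they share a footprint cube. I would restore control in one of two ways: either restrict to a lattice-aligned sub-family of candidate pairs placed at spacing $\omega(r)$ so that all involved site sets are pairwise disjoint and the indicators become exactly independent (this preserves $\IE B$ up to a multiplicative constant and does not shift the threshold), or apply Janson's inequality to the full family, in which case the pair-correlation sum $\Delta$ is dominated by pairs of blocking configurations sharing a single $(2r-1)$-cube and can be shown to satisfy $\Delta = o(\mu^2)$ throughout the subcritical regime $r^d < c_d^-(\pi)\log n$.

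The main obstacle, shared with the uniform Conjecture~\ref{conj:box_threshold}, is that $c_d^+(\pi)$ and $c_d^-(\pi)$ differ by a factor of $2^d$. The blocking construction is based on pairs of disjoint cubes of side $2r$, which inflates the effective radius by a factor of two, whereas the uniqueness-of-overlaps test operates on cubes of radius $r-1$. Closing the gap seems to require a genuinely new ingredient: either a richer family of blocking configurations operating at radius $r$ rather than $2r$ (for instance local swaps of larger patches exploiting approximate symmetries of sub-patterns, or higher-order overlaps in the spirit of $k$-wise identical neighborhoods for $k \geq 3$), or an identifiability proof that tolerates many local collisions by exploiting the global rigidity of the lattice, so that reconstruction succeeds well before uniqueness of individual $(r-1)$-neighborhoods does. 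I expect the sharp $c_d(\pi)$ to be characterized by a variational formula over labelings of an $r$-ball that admit a non-trivial local automorphism under $\pi$, and expressing this rate explicitly --- even for uniform $\pi$ --- would already resolve Conjecture~\ref{conj:box_threshold}.
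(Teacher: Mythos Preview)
The statement is a \emph{conjecture}: the paper does not prove it and explicitly flags it (together with the uniform-label Conjecture~\ref{conj:box_week}) as an open problem. So there is no paper proof to compare against, and your proposal is not a proof either --- you say as much when you identify the $2^d$ gap between $c_d^-(\pi)$ and $c_d^+(\pi)$ as the ``main obstacle'' requiring ``a genuinely new ingredient.'' That assessment is accurate, and your closing paragraph is a fair summary of where the difficulty lies; resolving it would indeed already settle the uniform case.

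Where your proposal goes beyond the paper is the lower bound for non-uniform $\pi$: the paper only proves Proposition~\ref{proplatticeblock} for uniform labels and explicitly notes that the second-moment argument breaks down otherwise. Your two proposed repairs are not equally good. Option~1 (restrict to a sub-family of candidate pairs so that ``the indicators become exactly independent'') does not work as stated. The indicators $X_{(\alpha,\beta)}$ are indexed by \emph{pairs} of disjoint cubes, and $X_{(\alpha,\beta)}$ and $X_{(\alpha,\gamma)}$ are correlated through $\alpha$ regardless of spacing; making the cubes disjoint does not make the pair-indicators independent. If instead you pass to a genuine matching so that no cube appears in two pairs, you drop from $\Theta(M^2)$ indicators to $\Theta(M)$, the first moment becomes $\Theta(M)\,\ps_2^{(2r)^d}$, and the threshold you recover is $c_d^-(\pi)/2$, not $c_d^-(\pi)$ --- so it \emph{does} shift the threshold, contrary to your claim.

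Option~2 (Janson) is the right repair and does go through. With $M\asymp (n/r)^d$ disjoint $(2r-1)$-cubes, $\mu\asymp M^2\ps_2^{(2r)^d}$ and the dominant contribution to $\Delta$ comes from pairs of pairs sharing one cube, giving $\Delta\asymp M^3\ps_3^{(2r)^d}$. Using $\ps_3\le \ps_2^{3/2}$ (equivalently $\|p\|_3\le\|p\|_2$) one gets $\Delta/\mu^2 \lesssim M^{-1}(\ps_3/\ps_2^2)^{(2r)^d}\le M^{-1}(1/\ps_2)^{(2r)^d/2}=\mu^{-1/2}\to 0$ whenever $\mu\to\infty$, so Janson yields $\IP(B\ge 1)\to 1$ throughout $r^d<(1-\eps)c_d^-(\pi)\log n$. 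This would extend Proposition~\ref{proplatticeblock} to general $\pi$ with the same constant, but it still leaves the conjecture open by the same factor $2^d$ as in the uniform case.
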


We believe that conjecture~\ref{conj:box_threshold2} should also extend to some dependent setups including:
\begin{itemize}
\item 
The uniform distribution of legal vertex colorings of a box with $q \geq 3 d$ colors. We require that $q$ is large to ensure correlation decay of the distribution. Note for example that if $q=2$ and $d \geq 2$, then the problem is degenerate as there are only two possible colorings of the graph. 

\item 
The Ising and Potts models with finite temperature $0 < \beta < \infty$ in the box.
\end{itemize} 

Proving the conjectures and establishing the value of the threshold in these examples are fascinating open problems.

\subsection{Sampling}

If $\bz_n^d$ has uniqueness of $(r-1)$-overlaps (asymptotically assured in the regimes of Propositions~\ref{proplatticeover} and~\ref{proplatticeovergen}),
then the argument of Lemma~\ref{lemcoupon} automatically implies an upper bound of order
$N(\log(N)-\log(\eps))$ (recall $N=\Nr:=(n-r+1)^d$ is the number of neighborhoods) 
on $\mrec(N,\eps,r)$, the minimum number of samples needed
to reconstruct the labels of the lattice with probability at least $1-\eps$. We can also use Lemma~\ref{lemcoupon2} to show that we need 
at least of order (large $N$, small $\eps$) $\frac{N}{r^d}\left( \log(N/r^d)-(\log(\eps)\right)$ samples to reconstruct in any regime.

\begin{proposition}
For $\bz_n^d$ with uniform vertex labels,
\ben{\label{408}
\mrec(N,\eps,r)\geq \frac{\log\left(\tsfrac{1}{\eps}-1\right)-\log\left(\frac{(2r-1)^d}{N}\right)}{-\log\left(1-\frac{r^d}{N}\right)}.
}
\end{proposition}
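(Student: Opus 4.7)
The plan is to apply Lemma~\ref{lemcoupon2} directly, using the exact geometric structure of $r$-cube neighborhoods in $\bz_n^d$. Since each $\nei_r(v_i)$ is an $r$-cube, $|\nei_r(v_i)| = r^d$ for every valid corner $v_i$, and so the numerator of the second moment ratio equals $N^2(1-r^d/N)^{2M}$.

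For the denominator, I would split the sum $\sum_{i,j}$ according to whether the cubes overlap. Two $r$-cubes with corners $v_i, v_j$ share a vertex iff $|v_i - v_j|_\infty < r$, so for each fixed $i$ there are at most $(2r-1)^d$ such $j$ (including $j=i$). For such overlapping pairs, the trivial lower bound $|\nei_r(v_i) \cup \nei_r(v_j)| \geq r^d$ yields a per-pair contribution of at most $(1-r^d/N)^M$, giving total at most $N(2r-1)^d(1-r^d/N)^M$. For non-overlapping pairs the union has size exactly $2r^d$, giving per-pair contribution $(1-2r^d/N)^M \leq (1-r^d/N)^{2M}$ (using $(1-x)^2 \geq 1-2x$), totalling at most $N^2(1-r^d/N)^{2M}$.

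Combining these, the second moment ratio of Lemma~\ref{lemcoupon2} is bounded below by
$$\frac{N^2(1-r^d/N)^{2M}}{N(2r-1)^d(1-r^d/N)^M + N^2(1-r^d/N)^{2M}} = \left(1 + \frac{(2r-1)^d}{N(1-r^d/N)^M}\right)^{-1}.$$
Requiring this to be at least $\eps$ rearranges to $(1-r^d/N)^M \geq (2r-1)^d/[N(1/\eps-1)]$. Taking logarithms and dividing by the positive quantity $-\log(1-r^d/N)$ gives exactly the claimed lower bound on the largest such $M$, and hence on $\mrec(N,\eps,r)$. There is no real obstacle in this argument: it is a direct calculation from Lemma~\ref{lemcoupon2}, and the only combinatorial input is the count $(2r-1)^d$ of possible overlapping translates.
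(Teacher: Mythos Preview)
Your proposal is correct and follows essentially the same approach as the paper: apply Lemma~\ref{lemcoupon2}, use $|\nei_r(v)|=r^d$, split the denominator into overlapping pairs (at most $N(2r-1)^d$ of them, with union size $\geq r^d$) and non-overlapping pairs (union size exactly $2r^d$), and simplify via $(1-2r^d/N)^M\leq (1-r^d/N)^{2M}$ to obtain the stated bound. The only cosmetic difference is that you make explicit the inequality $(1-x)^2\geq 1-2x$ that the paper uses implicitly when passing from its first displayed bound to the second.
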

\begin{proof}
We may use Lemma~\ref{lemcoupon2} with this neighborhood structure since 
its argument only relies on the size (and not the structure) of the neighborhoods. 
First note $|\nei_r(v)|=r^d$ for all $v$ and $|\nei_r(v)\cup\nei_r(w)|=2r^d$ if $\nei_r(v)\cap\nei_r(w)=\emptyset$
and  $|\nei_r(v)\cup\nei_r(w)|\geq r^d$ otherwise. Using these bounds, if $M$ is no greater than the right hand side of~\eq{408}, then 
\ba{
\frac{\left(\sum_{i=1}^N \left(1-\frac{|\nei_r(v_i)|}{N}\right)^M\right)^2}{\sum_{i,j=1}^N \left(1-\frac{|\nei_r(v_i)\cup\nei_r(v_j)|}{N}\right)^M}
&\geq \frac{N^2 \left(1-\frac{r^d}{N}\right)^{2M}}{N^2 \left(1-\frac{2r^d}{N}\right)^M+N (2r-1)^d \left(1-\frac{r^d}{N}\right)^M}\\
&\quad\geq \left[1+ \frac{(2r-1)^d}{N} \left( 1-\frac{r^d}{N}\right)^{-M}\right]^{-1} \geq \eps,
}
and the result follows.
\end{proof}

\section{\ER\ graph}\label{secER}
Assume the setup of Example~\ref{ex2}: $\gr$ is the
\ER\ random graph with $N$ vertices, 
the vertices have no labels (or to fit our setup, all labels are the same)
and for each vertex $v$, we have the $r$-neighborhoods $\nei_r(v)$ which are the subgraphs induced by vertices at distance $\leq r$ from $v$.
This example fits exactly into our general setup and so Lemmas~\ref{lemblock} and~\ref{lemoverlap} can be applied ``out of the box".
As is typical for \ER\ random graphs, 
the results differ if the graph has bounded average degree or not
and so we separate our results accordingly to Sections~\ref{secsparseER} 
and~\ref{secdenseER}. 

\subsection{Bounded average degree \ER}\label{secsparseER}
Let $\gr$ be the \ER\ random graph with $N$ vertices and
edge probability $p_N=\lambda/N$ for some $\lambda>0$.
We use the blocking configuration of Lemma~\ref{lemblock}  to show the following result.
\begin{proposition}\label{properblock}
For the \ER\ graph on $N$ vertices with $p_N=\lambda/N$, using the notation of the previous paragraph, and taking limits as $N\to\infty$, 
\begin{itemize}
\item if $\sqrt{N} \lambda^{r}(1-\lambda/N)^{Nr} \to\infty$, then the probability of identifiability tends to zero, and
\item if $\liminf\limits_{N\to\infty}\sqrt{N} \lambda^{r}(1-\lambda/N)^{Nr} >0$, then the probability of identifiability is strictly less than one.
\end{itemize}
\end{proposition}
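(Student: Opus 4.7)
The plan is to apply Lemma~\ref{lemblock} with a blocking configuration tailored to the sparse Erd\H{o}s-R\'enyi graph. Fix two small rooted trees $T_1\not\cong T_2$ of diameter at most $2r$ whose underlying unrooted structures (obtained by removing the roots) are isomorphic, so that $\sph(\mathrm{root}(T_1);1,2r)\cong\sph(\mathrm{root}(T_2);1,2r)$ while $\nei_1(\mathrm{root}(T_1))\not\cong\nei_1(\mathrm{root}(T_2))$. Let $B$ be the number of ordered pairs $(v,w)$ of vertices in $\gr$ such that the $2r$-ball around $v$ is an isolated embedded copy of $T_1$ rooted at $v$, the $2r$-ball around $w$ is an isolated embedded copy of $T_2$ rooted at $w$, and the two balls are vertex-disjoint. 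Here ``isolated'' means no $\gr$-edges other than those of the tree are incident to the ball's vertices.

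First I would compute $\IE B$. The probability that a specific isolated tree copy is present at $v$ is approximately $(\lambda/N)^{|E(T_i)|}(1-\lambda/N)^{\Theta(Nr)}$, and summing over the $\Theta(N^{|V(T_i)|-1})$ possible embeddings gives $\IP(v \text{ realizes an isolated } T_i) = \Theta(\lambda^r (1-\lambda/N)^{Nr})$. Multiplying by $\binom{N}{2}$ and correcting for disjointness yields
\begin{equation*}
\IE B = \Theta\bigl(N^2\lambda^{2r}(1-\lambda/N)^{2Nr}\bigr)=\Theta\bigl((\sqrt{N}\lambda^r(1-\lambda/N)^{Nr})^2\bigr),
\end{equation*}
which diverges under the first hypothesis and is bounded below under the second. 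Next I would bound the variance. Since indicators for configurations on disjoint edge sets are independent, and configurations sharing $O(r)$ vertices contribute only lower-order correlations, one gets $\var B = O(\IE B)$. The second moment method then yields $\IP(B\geq 1)=1-o(1)$ in the divergent case and $\IP(B\geq 1)$ bounded below by a positive constant in the $\liminf$ case.

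Third, I would verify the conditions of Lemma~\ref{lemblock} for each configuration in $B$: (i) holds by the choice of $T_1,T_2$, and (ii) holds by disjointness of the balls (forcing $\dist(v,w)=\infty$). Condition (iii) is verified whp by combining the structural asymmetry (since $T_1\not\cong T_2$ as rooted trees, the swap alters the labeled graph) with a rigidity argument for the ambient sparse \ER\ graph: the multiset of small components is tightly concentrated and the probability of a non-trivial graph automorphism that precisely compensates the swap at $v$ and $w$ vanishes.

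The main obstacle is condition (iii). For naive candidates such as $k$-stars of differing sizes, the swap can be undone by the transposition $(v,w)$ together with a reshuffling of leaves, producing $\gr\cong\gr'$ automatically. One must therefore choose $T_1, T_2$ so that the swap genuinely changes the isomorphism class of the entire graph, invoking an ambient rigidity estimate to rule out compensating automorphisms. This parallels the probabilistic bound on unwanted label-automorphisms in the proof of Proposition~\ref{proplatticeblock}, and is the technical heart of the argument.
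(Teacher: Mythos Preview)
Your strategy is exactly the paper's: exhibit two isolated tree-components whose $2r$-shells agree but whose $1$-neighborhoods at the chosen roots differ, and run the second moment method on the count $B$ of such pairs. The paper, however, does not leave the trees abstract. It takes $T_1$ to be a bare path on $2r+1$ vertices with $v$ an endpoint, and $T_2$ a path on $2r+1$ vertices with two extra leaves attached at \emph{each} endpoint, with $w$ one of the degree-$3$ endpoints. With this choice, condition~$(iii)$ of Lemma~\ref{lemblock} is \emph{deterministic}, not probabilistic: swapping $\nei_1(v)$ and $\nei_1(w)$ converts the pair (plain path, doubly-pronged path) into two singly-pronged paths, so the multiset of component isomorphism types changes and $\gr\not\cong\gr'$ automatically. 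No ``ambient rigidity'' estimate is needed, and your worry about compensating automorphisms evaporates once you commit to this concrete pair. This is the missing ingredient in your outline.

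Two smaller points. First, there is an arithmetic slip: $N^2\lambda^{2r}(1-\lambda/N)^{2Nr}$ equals $N\cdot(\sqrt{N}\lambda^r(1-\lambda/N)^{Nr})^2$, not the square itself; in the paper's version with $\sim 4r$ edges one actually gets $\IE B=\Theta\bigl((\sqrt{N}\lambda^r(1-\lambda/N)^{Nr})^4\bigr)$, though either scaling suffices for the second moment method under the stated hypotheses. Second, your variance bound can be sharpened for free: because the blocking configuration is an isolated induced subgraph, $X_\alpha X_\beta=0$ whenever $\alpha\cap\beta\neq\emptyset$, so only vertex-disjoint pairs contribute to $\IE B^2$, and for those $\IE[X_\beta\mid X_\alpha=1]$ is just the same count in an \ER\ graph on $N-|V(\alpha)|$ vertices. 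This replaces your ``lower-order correlations'' hand-wave with an exact factorization.
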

\begin{proof}
Note that $\lambda (1-\lambda/N)^N<\lambda/(1+\lambda)$ and so if
$r$ grows faster than $\log(N)$, then neither of the hypotheses of the proposition are satisfied, and so we can assume
without loss that $r/N^a\to 0$ for all $a>0$.
For a collection of vertices~$W$ of a graph~$\gr$, we define the \emph{edge-induced} subgraph on $W$ to be the (connected) subgraph formed by all edges of $\gr$ with at least one endpoint in $W$. With this definition, we lower bound the probability of the appearance of the following blocking edge-induced subgraph on $4r+6$ vertices:
the subgraph has two components, one a path graph on $2r+1$ vertices and the other a path graph on $2r+1$ vertices
with the addition of both end vertices being connected to two other vertices with no other edges to form ``prongs"; see Figure~\ref{figerblock1}.

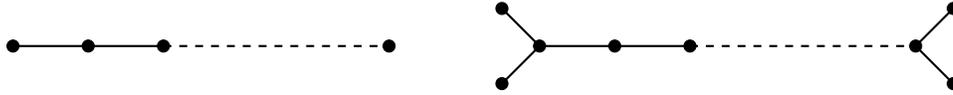
\begin{figure}[H]
\begin{center}
 \begin{tikzpicture}
 \draw [thick] (0,0) -- (2,0);
 \draw [dashed, thick] (2,0) -- (5,0); 
  \draw [thick] (7,0) -- (9,0);
  \draw[thick, dashed] (9,0) -- (12,0);
    \draw [thick] (7,0) -- (6.5,.5);
      \draw [thick] (7,0) -- (6.5,-.5);
      \draw [thick] (12,0) -- (12.5,.5);
      \draw [thick] (12,0) -- (12.5,-.5);
      \draw [fill] (0,0) circle [radius=.08];
        \draw [fill] (1,0) circle [radius=.08];
         \draw [fill] (2,0) circle [radius=.08];
          \draw [fill] (5,0) circle [radius=.08];
           \draw [fill] (7,0) circle [radius=.08];
        \draw [fill] (8,0) circle [radius=.08];
         \draw [fill] (9,0) circle [radius=.08];
          \draw [fill] (12,0) circle [radius=.08];
           \draw [fill] (6.5, .5) circle [radius=.08];
        \draw [fill] (6.5, -.5) circle [radius=.08];
         \draw [fill] (12.5,.5) circle [radius=.08];
          \draw [fill] (12.5,-.5) circle [radius=.08];
 \end{tikzpicture}
 \end{center}
\caption{Example of blocking subgraph for neighborhoods of radius $r$. The path graph has $2r+1$ vertices.}
\label{figerblock1}
\end{figure}

Note that this blocking set satisfies the hypotheses of Lemma~\ref{lemblock} by taking
$v$ to be an endpoint of the path graph and $w$ to be one of the degree three vertices.
Alternatively, it's easy to see that if such a subgraph is present, then identifiability is impossible because 
there are at least two ways to construct the graph consistent with the neighborhoods,
by switching one of the prongs to the path graph; see Figure~\ref{figerblock2} for illustration.

\begin{figure}[H]
\begin{center}
 \begin{tikzpicture}
 \draw [thick] (0,0) -- (2,0);
 \draw [dashed, thick] (2,0) -- (5,0); 
  \draw [thick] (7,0) -- (9,0);
  \draw[thick, dashed] (9,0) -- (12,0);
      \draw [thick] (12,0) -- (12.5,.5);
      \draw [thick] (12,0) -- (12.5,-.5);
        \draw [thick] (5,0) -- (5.5,.5);
      \draw [thick] (5,0) -- (5.5,-.5);
      \draw [fill] (0,0) circle [radius=.08];
        \draw [fill] (1,0) circle [radius=.08];
         \draw [fill] (2,0) circle [radius=.08];
          \draw [fill] (5,0) circle [radius=.08];
           \draw [fill] (7,0) circle [radius=.08];
        \draw [fill] (8,0) circle [radius=.08];
         \draw [fill] (9,0) circle [radius=.08];
          \draw [fill] (12,0) circle [radius=.08];
         \draw [fill] (12.5,.5) circle [radius=.08];
          \draw [fill] (12.5,-.5) circle [radius=.08];
             \draw [fill] (5.5,.5) circle [radius=.08];
          \draw [fill] (5.5,-.5) circle [radius=.08];
 \end{tikzpicture}
\end{center}
\caption{A subgraph that has the same $r$-neighborhoods as that of Figure~\ref{figerblock1}}
\label{figerblock2}
\end{figure}
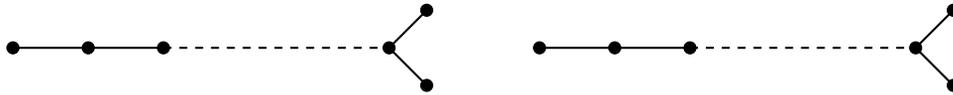

 Let $B=B_{N, r, \lambda}$ be the number of such edge-induced subgraphs of $\gr$ and write
 $B=\sum_{\alpha\in\Gamma} X_\alpha$,
 where $\Gamma=\Gamma_{N,4r+6}$ is the collection of subsets of vertices of size $4r+6$ and for $\alpha\in\Gamma$,
  $X_\alpha$ is the indicator
 that the blocking subgraph of Figure~\ref{figerblock1} is the edge-induced subgraph of $\gr$ on $\alpha$. 
The $X_\alpha$ are equally distributed and for $\alpha\not=\beta$, if $\alpha\cap\beta\not=\emptyset$, 
then $X_\alpha X_\beta=0$. Thus we find for (say) $\alpha=\{1,\ldots, 4r+6\}$ and $\beta=\{4r+7, \ldots, 8r+12\}$,
\bes{
&\IE B = \binom{N}{4r+6} \IE X_\alpha, \hspace{1cm}
  \IE B^2 = \IE B\left(1+\binom{N-4r-6}{4r+6} \IE [ X_\beta|X_\alpha=1]\right).
}

From this point we need to compute $\IE X_\alpha$ and $\IE[X_\beta|X_\alpha=1]$.
There is at most one copy of the blocking edge-induced subgraph on $\alpha$, but
there are a number of ways the subgraph can appear. By enumeration and noting the chance that any potential
way the subgraph can appear, we find
\besn{\label{ermeanblock1}
\IE X_\alpha&=\binom{4r+6}{2r+1}\binom{2r+5}{4}\binom{4}{2}\frac{2 (2r+1)!^2}{2^2} p_N^{2(2r+2)} (1-p_N)^{(4r+6)(N-3)+4};
}
the first binomial coefficient counts the number of ways of assigning $2r+1$ vertices of $\alpha$ to the path graph, the second assigns
four of the remaining vertices to the prongs and for each of the $(2r+1)$-paths, there are $(2r+1)!/2$ ways to put them in order; the final factor of $2$
comes from assigning the pairs of prong vertices to an end. Once the vertices are assigned, there are $4r+4$ edges that must appear,
each with probability $p_N$, and 
$2(2r-1)(N-3)+6(N-2)+2(N-4)$ edges that must not appear.

Similarly, given $X_\alpha=1$, none of the vertices of $\alpha$ have edges connecting to vertices outside of $\alpha$ and so 
$X_\beta|X_\alpha=1$  is distributed as $X_\beta$, but on an \ER\ graph on $N-4r-6$ vertices and 
chance of edge $p_N$. Thus we use~\eq{ermeanblock1} but with $N-4r-6$ replacing $N$ (except in $p_N$) to find
\besn{\label{ermeanblock2}
\IE&[ X_\beta|X_\alpha=1] =\binom{4r+6}{2r+1}\binom{2r+5}{4}\binom{4}{2}\frac{2 (2r+1)!^2}{2^2}  p_N^{2(2r+2)} (1-p_N)^{(4r+6)(N-4r-9)+4}.
}

Putting together~\eq{ermeanblock1} and~\eq{ermeanblock2} and using that under either of the hypotheses of the proposition, $r/N^a\to0$ for any $a>0$, we find
\bes{
&\frac{(\IE B)^2}{\IE B^2}\geq \frac{(N-4r-6)^{4r+6}p_N^{4r+4}(1-p_N)^{N(4r+6)}}{8+(N-4r-6)^{4r+6}p_N^{4r+4}(1-p_N)^{(N-4r)(4r+6)}},
}
and under the first hypothesis of the proposition, the numerator and the denominator tend to infinity at the same rate,
and under the second, the numerator on the right hand side stays bounded away from zero.
\end{proof}

If $r$ is larger than the diameter of the graph, then clearly we can identify from the neighborhoods.
Thus we can use known results on the diameter of the \ER\ random graph (see  \cite{Riordan2010}, \cite{Luczak1998}, \cite{Nachmias2008}, \cite{Addario-Berry2012}) to get a lower bound on the
growth of $r$ to guarantee identifiability.
Denote convergence in probability by $\stackrel{p}{\longrightarrow}$.
\begin{theorem}\label{thmerdiam}
Let $\gr_N$ be the \ER\ random graph on $N$ vertices with edge probability $p_N=\lambda/N$ for a fixed $\lambda>0$ and let
$D=D_{N,\lambda}$ to be the maximum diameter of a component of $\gr_N$.
\begin{itemize}
\item \cite[Theorem~11]{Luczak1998} If $\lambda<1$, then $D_{N,\lambda}/\log(N)\stackrel{p}{\longrightarrow} 1/\log(1/\lambda)$.
\item \cite[Theorem~1.1]{Nachmias2008}, \cite[Theorem~5]{Addario-Berry2012} If $\lambda=1$, then $N^{-1/3}D_{N,1}$ converges in distribution to a non-negative and non-degenerate distribution.
\item \cite[Theorem~1.1]{Riordan2010} If $\lambda>1$, and $\lambda_*<1$ is the unique solution to $\lambda e^{-\lambda}=\lambda_* e^{-\lambda_*}$,
then 
\be{
\frac{D_{N,\lambda}}{\log(N)}\stackrel{p}{\longrightarrow} \frac{1}{\log(\lambda)}+\frac{2}{\log(1/\lambda_*)}.
}
\end{itemize}  
\end{theorem}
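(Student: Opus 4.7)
The statement is a compilation of three known results about the diameter of the \ER\ random graph in the sparse regime, one per citation. My plan is to sketch, at a high level, how I would prove each of the three bullets; the techniques are quite different across the subcritical, critical, and supercritical phases.

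\textbf{Subcritical case ($\lambda<1$).} The plan is to analyze the breadth-first exploration from a vertex $v$ by coupling it with a Galton--Watson branching process with $\mathrm{Poisson}(\lambda)$ offspring distribution. Since $\lambda<1$, this branching process is subcritical, and the tail of its total progeny and height is well understood. The key estimates are: (i) the height of a subcritical Galton--Watson tree conditioned on survival to generation $k$ decays geometrically like $\lambda^k$, so a single component of height $\geq k$ exists with probability roughly $\lambda^k$; and (ii) a union bound over $N$ starting vertices, together with an upper tail of the form $\IP(\text{height} \geq (1+\eps)\log N/\log(1/\lambda)) = \lito(1/N)$, yields the upper bound on $D_{N,\lambda}$. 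For the matching lower bound, use a second moment or Poisson-approximation argument on the number of vertices whose component has height at least $(1-\eps)\log N/\log(1/\lambda)$. The main obstacle is controlling the coupling between the exploration process and the Galton--Watson tree uniformly over $N$ starting vertices, which requires the exploration to terminate before too many vertices are consumed.

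\textbf{Critical case ($\lambda=1$).} Here I would use the connection between the components of $\gr_N$ and the excursions of a random walk / Brownian motion, following Aldous's theory of the continuum random tree. The plan is: (i) show that the largest components have sizes of order $N^{2/3}$ and rescaled, their exploration encodings converge to Brownian excursions with parabolic drift; (ii) show that the diameter of a component coded by an excursion is, up to a constant factor, comparable to the width of the excursion; (iii) deduce that $N^{-1/3} D_{N,1}$ converges in distribution to the width of the largest standard excursion in the limiting process. The hard part is (ii): showing diameter concentration around the width of the encoding excursion requires understanding how the tree-like structure of near-critical components maps onto the encoding, including controlling the extra edges that create cycles (these are asymptotically $O(1)$ in each component).

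\textbf{Supercritical case ($\lambda>1$).} The plan is to decompose the diameter of the unique giant component into two pieces: the diameter of the $2$-core (the maximal subgraph of minimum degree $\geq 2$), plus twice the maximum depth of the subcritical ``fringe'' trees hanging off the $2$-core. The $2$-core, once conditioned on its degree sequence, looks like a configuration-model graph with average degree strictly greater than $1$, and breadth-first exploration there expands at rate $\lambda$, giving a $2$-core diameter of $\log N/\log \lambda$. The fringes are distributed like subcritical Galton--Watson trees with Poisson offspring of mean $\lambda_*$ (the dual parameter solving $\lambda_* e^{-\lambda_*} = \lambda e^{-\lambda}$), by the classical duality: removing the giant leaves behind subcritical clusters with parameter $\lambda_*$. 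By the analysis from the subcritical case applied to these fringes, their maximum depth is $\log N/\log(1/\lambda_*)$, contributing $2\log N/\log(1/\lambda_*)$ because the diameter adds a fringe at each endpoint of the longest $2$-core path. The main obstacle is identifying the dual Poisson parameter $\lambda_*$ rigorously and controlling the fringe depths uniformly; this is done cleanly in Riordan--Wormald. Combining the three bullets then yields Theorem~\ref{thmerdiam}, and the corollary for identifiability from $r$-neighborhoods follows immediately because $r \geq D_{N,\lambda}$ makes every component equal to some $\nei_r(v)$.
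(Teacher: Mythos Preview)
The paper does not prove Theorem~\ref{thmerdiam} at all: it is stated as a direct quotation of results from \cite{Luczak1998}, \cite{Nachmias2008}, \cite{Addario-Berry2012}, and \cite{Riordan2010}, and is used as a black box to combine with Proposition~\ref{properblock} and yield Theorem~\ref{thmerintro}. So there is no ``paper's own proof'' to compare against; your sketch is extra-curricular relative to what the paper requires.

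That said, your outline is a reasonable high-level summary of the arguments in the cited works. The subcritical sketch via branching-process coupling and first/second moments is essentially \L uczak's approach. The critical sketch correctly identifies the Aldous excursion machinery underlying \cite{Nachmias2008} and \cite{Addario-Berry2012}, though your step (ii) oversimplifies: the diameter is not directly the ``width'' of the encoding excursion, and Nachmias--Peres instead bound the diameter by a separate argument about mixing times and tree heights, while Addario-Berry--Broutin--Goldschmidt obtain the full scaling limit of the metric space. The supercritical sketch captures the Riordan--Wormald decomposition into core plus fringes and the duality giving $\lambda_*$, though the actual proof is considerably more delicate in tracking the additive constants and in showing the two contributions combine additively rather than sub-additively. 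None of this matters for the present paper, which only needs the statements.
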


Theorem~\ref{thmerintro} in the introduction summarizes the lower bound on the neighborhood size for identifiability given by Proposition~\ref{properblock}
and the upper bounds given by the properties of the diameter of Theorem~\ref{thmerdiam}.

\smallskip
\noindent{\bf Labeled \ER.} 
Assuming 
vertices have i.i.d.\ labels from a finite set and we let $\ps_2$ be the chance that two given
vertices have the same label, we show the following result.
\begin{proposition}
For the labeled \ER\ graph with $p_N=\lambda/N$, using the notation of the previous paragraph, and assuming $\ps_2\not=1$,
if for some $\eps>0$,
\be{
\frac{r}{\log(N)-2\log(1-\ps_2)}< \frac{1}{2\lambda -\log(\lambda ^2 \ps_2)}-\eps, 
}
then the chance of identifiability tends to zero as $N\to\infty$.
\end{proposition}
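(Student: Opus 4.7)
The plan is to adapt the second-moment/blocking argument from the proof of Proposition~\ref{properblock} to the labelled setting by inserting a label-matching probability factor. I would use the same graph-theoretic blocking subgraph of Figure~\ref{figerblock1}: a disjoint union of a plain path $P_1$ on $2r+1$ vertices $x_0,\ldots,x_{2r}$ and a path $P_2$ on $2r+1$ vertices $y_0,\ldots,y_{2r}$ with a pendant pair attached at each of $y_0$ and $y_{2r}$.

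First I would determine the labelling condition under which the prong swap of Figure~\ref{figerblock2} preserves the multi-set of \emph{labelled} $r$-neighborhoods. Pairing up the $2r+2$ neighborhoods affected by the swap via their natural labelled isomorphisms (a ``path with a single pronged endpoint'' is rigid modulo reflection) yields the reflection condition $\ell(x_k)=\ell(y_{2r-k})$ for $k=0,1,\ldots,2r-1$: $2r$ label-match constraints contributing a factor $\ps_2^{2r}$ to the probability of each blocking configuration. The underlying \emph{unlabelled} graphs of Figures~\ref{figerblock1} and~\ref{figerblock2} have distinct component-size multi-sets ($\{2r+1,2r+5\}$ versus $\{2r+3,2r+3\}$), so they are automatically non-isomorphic for any assignment of labels, and condition~(iii) of Lemma~\ref{lemblock} is free.

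Next I would run the first- and second-moment computations of the proof of Proposition~\ref{properblock} with these label factors included. Writing $B$ for the number of such labelled blocking configurations, $\IE B$ is its unlabelled analogue multiplied by $\ps_2^{2r}$, and the pairwise-independence bound $\var B\leq \IE B$ carries over: labels on disjoint vertex sets are independent by the i.i.d.\ hypothesis, and overlapping configurations are mutually exclusive at the structural level, exactly as in the unlabelled case. The second-moment method therefore yields $\IP(B>0)\to 1$ as soon as $\IE B\to \infty$, which on taking logarithms reduces to $\log N > r(2\lambda - 2\log\lambda - \log\ps_2) + O(1)$, matching the leading-order form of the stated inequality after solving for $r$.

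I expect the principal obstacle to be the bookkeeping needed to produce the precise numerator $\log N -2\log(1-\ps_2)$ rather than just $\log N + O(1)$. The $(1-\ps_2)^{\pm 2}$ correction should emerge from a careful accounting of how the $\ps_2$-dependent combinatorial constants enter $\IE B$ — most plausibly through a pendant-pair distinctness condition (e.g.\ $\ell(a_i)\neq \ell(b_i)$ for $i=1,2$) that ensures the swap is not absorbed by the automorphism group of the blocking subgraph — combined with a careful evaluation of the ratio $(\IE B)^2/\IE B^2$ used in the second moment method. The remaining manipulations are a routine adaptation of the unlabelled argument.
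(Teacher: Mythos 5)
Your proof is essentially correct but rests on a genuinely different blocking configuration from the one the paper uses. The paper's proof drops the prongs entirely: its blocking configuration is two isolated plain paths on $2r+1$ vertices whose $2r-1$ middle labels agree and whose endpoint labels differ, and the ``switch'' is a swap of \emph{labels} at suitably chosen endpoints rather than a structural swap of pendant vertices. That configuration is leaner ($4r+2$ vertices, $4r$ edges, label factor $\ps_2^{2r-1}(1-\ps_2)^2$, which is where the constants in the statement come from), and it makes the labels do all the work; the price is that condition $(iii)$ of Lemma~\ref{lemblock} is no longer automatic --- one must check that the label swap is not undone by an automorphism (e.g.\ reversing a path), which is exactly the role of the ``two different labels at the endpoints'' requirement. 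Your configuration keeps the structural prong swap of Figures~\ref{figerblock1}--\ref{figerblock2} and adds the reflection condition $\ell(x_k)=\ell(y_{2r-k})$ on $2r$ vertex pairs; your verification that this preserves the labelled $r$-neighborhoods and that non-isomorphism is free (distinct component-size multisets) is sound, and since both configurations give $\log \IE B = 2\log N - 2r\bigl(2\lambda-2\log\lambda-\log\ps_2\bigr)+\bigo(1)$, the additive constant $-2\log(1-\ps_2)$ in the numerator is absorbed by the $\eps$ either way. So your route proves the stated asymptotic result, just not with the paper's particular sub-leading constant.

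One claim in your sketch is wrong as stated and should be repaired, though it is repairable by the very computation you say you would import. You assert that $\var B\leq \IE B$ holds by pairwise independence ``exactly as in the unlabelled case.'' But in the \ER\ setting the indicators $X_\alpha$ are \emph{not} pairwise independent even for disjoint vertex sets: $X_\alpha=1$ forces the absence of all edges from $\alpha$ to the rest of the graph, which strictly increases $\IP(X_\beta=1\mid X_\alpha=1)$ for disjoint $\beta$. Pairwise independence is what powers the lattice argument (Proposition~\ref{proplatticeblock}), not Proposition~\ref{properblock}, whose proof instead computes $\IE[X_\beta\mid X_\alpha=1]$ explicitly and verifies $(\IE B)^2/\IE B^2\to 1$ directly. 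In the labelled version the label indicators on disjoint vertex sets genuinely are independent and simply multiply through that computation, so following Proposition~\ref{properblock}'s second-moment bookkeeping (rather than invoking $\var B\le \IE B$) closes the argument.
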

\begin{proof}
The argument is nearly identical to the proof 
of Proposition~\ref{properblock} but now the blocking configuration 
is two isolated path graphs with $2r+1$ vertices, both having the same labels in the 
$2r-1$ middle vertices, and each having two different labels at the endpoints. Then at least one of the possible switching of labels of two endpoints of each path graph will result in a non-isomorphic labeled graph with the same neighborhoods.
If $B$ is the number of such configurations, then the result follows from the second moment method after computing
\bes{
&\frac{(\IE B)^2}{\IE B^2}\geq \frac{(N-4r-2)^{4r+2}p_N^{4r}(1-p_N)^{4Nr}\ps_2^{2r-1}(1-\ps_2)^2}{
\big((N-4r-2)^{4r+2}p_N^{4r}(1-p_N)^{(N-2r-4)(4r-2)} \ps_2^{2r-1}(1-\ps_2)^2\big)+8}. 
\qedhere}
\end{proof}

We make the following conjecture. 
\begin{conjecture} \label{conj:sparselabel_threshold}
Consider a distribution $\pi$ that is fully supported on $\{1,\ldots, q\}$ and the i.i.d.\ $\pi$-vertex labeling of 
the \ER\ random graph on $N$ vertices with parameter $\lambda/N$. For positive $\lambda\not=1$,  
there exists a constant $c_\lambda(\pi)$ such that for every $\eps > 0$, when $r \geq (1+\eps) c_\lambda(\pi) \log N$, the probability of identifiability tends to one as $N\to\infty$,
 while when $r \leq (1-\eps) c_\lambda(\pi) \log N$, the probability of identifiability tends to $0$. 
\end{conjecture}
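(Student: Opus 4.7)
The plan is to establish a sharp threshold by combining two families of estimates: a blocking-configuration second moment argument for the lower bound $r \le (1-\eps)c_\lambda(\pi)\log N$, and an uniqueness-of-overlaps argument via Lemma~\ref{lemoverlap} for the upper bound $r \ge (1+\eps)c_\lambda(\pi)\log N$. Because the two bounds must yield the \emph{same} constant $c_\lambda(\pi)$, the heart of the problem is to identify the dominant ``collision" event on both sides and show they match.

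For the lower bound I would extend the blocking argument preceding the conjecture by enlarging the family of candidate configurations. Rather than fixing two parallel labeled line graphs with a swap at an endpoint, I would optimize over all pairs of vertex-disjoint structures $(S_v, S_w)$ satisfying the hypotheses of Lemma~\ref{lemblock}: each consists of $\nei_1(\cdot)$ together with its $2r$-shell, with the two shells identical as labeled graphs and $\nei_1(v)\neq\nei_1(w)$. For each pattern, $\IE B$ factors as a combinatorial multiplicity times $p_N^{(\text{required edges})}(1-p_N)^{(\text{forbidden edges})}$ times a label factor built from the $\ps_j$'s; maximizing this over pattern shapes should give the candidate constant $c_\lambda(\pi)$. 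Since pairs of disjoint candidate configurations sit on far-apart vertex sets when $r=\Theta(\log N)$, the second moment closes as in Proposition~\ref{properblock}. For the upper bound, Lemma~\ref{lemoverlap} reduces identifiability to uniqueness of all $(r-1)$-neighborhoods. The $(r-1)$-neighborhood of a typical vertex is well approximated by a Poisson$(\lambda)$ labeled Galton--Watson tree truncated at depth $r-1$; the probability that two such labeled rooted trees coincide decays at a rate computable by a depth recursion using the $\ps_j$'s, and a union bound over $\binom{N}{2}$ pairs finishes the subcritical ($\lambda<1$) case. For $\lambda>1$ an extra step is needed: pairs of vertices both in the giant component have overlapping neighborhoods once $r$ exceeds roughly half the diameter, and for that regime uniqueness should follow by noting that, beyond the diameter of the giant (Theorem~\ref{thmerdiam}), each neighborhood sees essentially the whole component, so rigidity of labeled rooted trees together with local weak convergence to the unimodular Poisson Galton--Watson tree should suffice.

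The principal obstacle will be matching the two constants exactly. The blocking argument captures the rate of a single, very structured swap, while the overlap argument bounds a worst-case pairwise collision; a priori these live at different exponential rates. Closing the gap likely requires showing that the \emph{dominant} pairwise collision between $(r-1)$-neighborhoods is already of the form exploited by the blocking configuration (an entropy argument on the set of labeled rooted trees of depth $r-1$ seems natural), and then verifying the two exponents coincide via a joint large-deviations calculation involving $\lambda$ and the label distribution $\pi$. A secondary difficulty is the $\lambda>1$ regime, where long-range correlations inside the giant component break the clean pairwise-independence of indicators that made the line-graph blocking proof of Proposition~\ref{properblock} tractable; handling this will likely require truncating to configurations that avoid the 2-core or conditioning on the exploration of the giant.
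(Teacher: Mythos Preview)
This statement is a \emph{conjecture} in the paper, not a theorem: the paper provides no proof, and immediately after stating it writes ``Open problems are to establish the conjecture, determine the value of $c_\lambda(\pi)$, and understand the critical case where $\lambda=1$.'' So there is nothing to compare your proposal against; you are attempting to resolve an open problem.

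As a research plan your outline is sensible and indeed follows the template the paper uses for its partial results (blocking configurations for the lower half, uniqueness of $(r-1)$-neighborhoods via Lemma~\ref{lemoverlap} for the upper half). But as written it is not a proof, and you yourself flag the genuine gap: the blocking lower bound and the overlap upper bound produce \emph{different} exponential rates, and nothing in your sketch forces them to coincide. ``Optimize over all blocking patterns'' and ``compute the collision probability of two labeled Poisson Galton--Watson trees by a depth recursion'' are both nontrivial large-deviations problems in their own right, and there is no a priori reason their optimizers should match---indeed, the paper's own unlabeled results (Theorem~\ref{thmerintro}) already exhibit a gap between the blocking constant $1/(2(\lambda-\log\lambda))$ and the diameter-based upper constant, so the overlap route via Lemma~\ref{lemoverlap} is known \emph{not} to be sharp even in the simpler unlabeled case. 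Your proposal does not explain why labeling would close that gap. The supercritical part is even more speculative: ``rigidity of labeled rooted trees together with local weak convergence'' is a slogan, not an argument, and local weak convergence says nothing about events at the scale $r=\Theta(\log N)$ where the whole question lives.

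In short: reasonable direction, but what you have is a description of the difficulty rather than a resolution of it, which is consistent with the paper's assessment that this is open.
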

Open problems are to establish the conjecture, determine the value of $c_\lambda(\pi)$, and understand 
the critical case where $\lambda=1$.

\subsection{Dense \ER\ graph}\label{secdenseER}

Now we assume that 
$\gr$ is the \ER\ random graph with $N$ vertices and edge probability $p_N$ such that as $N\to\infty$, $ N p_N / \log(N)^2\to \infty$
and the neighborhoods are as before, described in Example~\ref{ex2}. We restate and prove Theorem~\ref{thmdenseintro} from
the introduction.

\begin{theorem}\label{properdense}
If $\gr$ is the \ER\ random graph with $N$ vertices and edge probability $p_N$ satisfying $ N p_N / \log(N)^2\to \infty$ as $N \to \infty$ 
and we are given $\nei_3(v)$ for each vertex $v$ in $\gr$, 
then 
the probability of identifiability tends to one.
\end{theorem}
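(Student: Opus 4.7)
The plan is to apply Lemma~\ref{lemoverlap}: it suffices to show that with high probability, the rooted $2$-neighborhoods $\{\nei_2(v)\}_{v \in V}$ are pairwise non-isomorphic. Once this is established, the algorithm of Lemma~\ref{lemoverlap} reconstructs the single connected component of $\gr$ (which equals all of $\gr$ with high probability in this regime) from the given $3$-neighborhoods, since each $\nei_3(v)$ contains, as a recoverable sub-structure, the $2$-neighborhood of every immediate neighbor of $v$.

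A key structural feature is that for every neighbor $u$ of $v$, every edge incident to $u$ lies inside $\nei_2(v)$ (the other endpoint has distance at most $2$ from $v$). Consequently, $\nei_2(v)$ records the true graph-degree of each neighbor of $v$, the induced subgraph on $\{v\} \cup N(v)$, and the full bipartite adjacency from $N(v)$ out to the distance-$2$ sphere. Any rooted isomorphism $\phi: \nei_2(v) \to \nei_2(w)$ sending $v \mapsto w$ therefore restricts to a degree-preserving bijection $N(v) \to N(w)$ that also matches these higher-order structures. The strategy is to fix $v \neq w$, enumerate candidate bijections $\phi$, for each bound the probability that the random edges of $\gr$ extend $\phi$ to a rooted isomorphism, and then take a union bound over all $\binom{N}{2}$ pairs.

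The main obstacle is a counting-versus-entropy mismatch. In the boundary regime $p_N \approx \log^2 N / N$, no single scalar invariant of $\nei_2(v)$ fluctuates finely enough to separate $N$ vertices by itself: the sum of neighbor degrees, say, has mean $\Theta((Np_N)^2)$ and standard deviation $\Theta(Np_N)$, so two random vertices collide on this statistic with probability $\Theta(1/(Np_N))$, much larger than $1/N^2$. The proof therefore needs to combine many almost-independent pieces of information by exploiting the independence of edges not shared between $\nei_2(v)$ and $\nei_2(w)$. I would stratify the pairs $(v,w)$ by $\dist(v, w)$ and by the overlap $|N(v) \cap N(w)|$; in each stratum, identify a sufficiently large set of ``free'' random edge variables constrained by a candidate bijection and show that their joint randomness drives the per-bijection probability below $1/\bigl(\binom{N}{2} \cdot |N(v)|!\bigr)$. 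The hypothesis $Np_N \gg \log^2 N$ should be exactly what is required to supply enough free bits for this union bound to close.
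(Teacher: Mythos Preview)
Your high-level plan matches the paper's: reduce to showing that all rooted $2$-neighborhoods are pairwise non-isomorphic and invoke Lemma~\ref{lemoverlap}. (The paper first disposes of the range $p_N>N^{-3/5}$ by observing that the diameter is then at most $3$, so every $\nei_3(v)$ is the whole graph.) However, the proposed union bound over the $|N(v)|!$ bijections $\phi:N(v)\to N(w)$ does not close near the lower boundary $Np_N\asymp\log^2 N$. With $k:=|N(v)|\approx Np_N$, Stirling gives $k!\approx e^{k\log k-k}$, while the strongest per-vertex constraint readily available---that $\deg(\phi(u))=\deg(u)$, an event of probability $\Theta((Np_N)^{-1/2})$ by the local CLT---contributes only $k^{-k/2}$ over all $u\in N(v)$; the product $k!\cdot k^{-k/2}\approx k^{k/2}e^{-k}\to\infty$. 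The induced-subgraph constraint on $N(v)$ versus $N(w)$ adds only a factor $\exp(-\Theta(p_Nk^2))=\exp(-\Theta(N^2p_N^3))$, which is $1-o(1)$ when $p_N\approx\log^2 N/N$, and stratifying by $\dist(v,w)$ or $|N(v)\cap N(w)|$ does not alter the count. The factorial in your entropy budget is simply too large for a per-bijection bound in this regime.

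The paper sidesteps this by never enumerating bijections. It bounds $\IP(\nei_2(v)=\nei_2(w))$ by the probability that the \emph{multi-set} of neighbor degrees of the $K\approx Np_N$ vertices in $N(v)\setminus N(w)$ coincides with that of $N(w)\setminus N(v)$---a single event, not a union of $k!$ events. After conditioning on $\deg(v)=\deg(w)$ and peeling off the contribution of edges internal to the symmetric difference (each such internal degree is bounded by a constant with high probability, since $Kp_N\to 0$), the two multi-sets become independent size-$K$ i.i.d.\ samples from a law of effective support $\Theta(\sqrt{Np_N})$. Their coincidence probability is then controlled through the \emph{count vectors} $X_j=|\{u:\deg(u)=j\}|$: along $\Theta(\sqrt{Np_N})$ values of $j$ near the mode, each $X_j$ has standard deviation $\Theta((Np_N)^{1/4})$, and a sequential local-CLT argument gives a matching probability of order $\exp\{-\Theta(\sqrt{Np_N}\,\log(Np_N))\}$, which is $o(N^{-2})$ precisely when $Np_N\gg\log^2 N$. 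The idea missing from your proposal is this replacement of bijection-counting by a direct collision bound on an isomorphism-invariant statistic.
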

\begin{proof}
If $p_N>N^{-3/5}$, then, with high probability, the diameter of $\gr$ is at most $3$ \cite{Bollobas1981} and so we can assume without loss that $ p_N\leq N^{-3/5}$.

We show that the chance of the event ``each vertex $v$ has distinct $2$-neighborhood $\nei_2(v)$" tends to one and 
then the result follows by the uniqueness of overlaps Lemma~\ref{lemoverlap}.
If $v$ and $w$ are distinct vertices of $\gr$, then it's enough to show as $N\to\infty$,
\ben{\label{der1}
N^2 \IP\left(\nei_2(v)=\nei_2(w)\right)\to 0.
}
In order for $\nei_2(v)=\nei_2(w)$, the degree of $v$ ($\deg(v)$) must be equal to that of $w$ and the degrees of the neighbors of $v$ and $w$ must be equal as multi-sets.
 Note that we can write $\deg(v)=B_v+I$ and $\deg(w)=B_w+I$ where $B_v$ and $B_w$ are independent with distribution $\Bi(N-2, p_N)$
 and $I$ is the indicator that $v$ and $w$ have an edge between them. We bound the chance that $v$ and $w$ have the same degree 
 and the chance of sharing too many neighbors as follows.

\begin{enumerate}
\item The Chernoff bound of Lemma~\ref{lemchernbd} applied to the binomial distribution implies that for all $0<\eps_1<1/2$,
\be{
\IP\left( \deg(v)\in Np_N(1\pm \eps_1)\right)\geq 1-2 \exp\big\{-\tsfrac{\eps_1^2}{3} N p_N\big\}.
}
\item Noting that the event $\deg(v)=\deg(w)$ is independent of $I$, the indicator that $v$ and $w$ have an edge between them,
we use  the local limit theorem for the binomial distribution to find for $C$ not depending on $N$,
\bes{
\IP(\deg(w)&=\deg(v)|\deg(v)\in Np_N(1\pm \eps_1))\leq \frac{C}{\sqrt{N p_N}}.
}
\item Denote the common degree of $v$ and $w$ by 
\be{
M:=\deg(v)\Ind \left[\deg(w)=\deg(v)\in Np_N(1\pm \eps_1))\right],
}
assuming the conditioning of the items above hold (and zero otherwise), and let $K=M-|\nei_1(v)\cap\nei_1(w)|-I$ be the number of 
neighbors of $v$ and $w$ that are connected to exactly one of $v$ or $w$. Given $M>0$, the neighbors of $v$ and $w$ are each chosen
uniformly from the $N-1$ possible neighbors. Thus if $v$ and $w$ are not neighbors,  then
$M-K$ is hypergeometric with $M$ draws, $M$ marked balls and $N-2$ total balls and if $v$ and $w$ are neighbors,
then $M-K$ is hypergeometric with $M-1$ draws, $M-1$ marked balls and $N-2$ total balls. In either case,
after noting that hypergeometric distributions can be represented as sums of \emph{independent} 
indicators \cite{Pitman1997} the Chernoff bound of Lemma~\ref{lemchernbd} implies that
for any $0<\eps_2<1/3$, 
\ba{
\IP\left( K\geq  (1-\eps_2) M \big| M, \{M>0\}\right)
	&=\IP\big( M-K\leq  \big(\eps_2\tsfrac{M}{\IE[M-K]}\big) \IE[M-K] \big|M, \{M>0\}\big) \\
	&\geq 1- \exp\left\{-\tsfrac{M}{3}\Big(\eps_2 - \tsfrac{M}{N-2}\Big)\right\},
}
where we are using that if $M>0$, then $M\in Np_N(1\pm \eps_1)$ and so $M/\IE[M-K] \approx N/M \gg 1$. 
Thus, if $M>0$ then
\ba{
\tsfrac{1}{3} Np_N < Np_N(1-\eps_2)(1-\eps_1)\leq K \leq M < 2Np_N.
}
\end{enumerate}

Given $M>0$ and $K$, let
\ba{
\{D(v)\}:=\{D_1(v),\ldots, D_K(v)\}, \,\, \mbox{ and } \,\, 
\{D(w)\}:=\{D_1(w),\ldots, D_K(w)\}
}
 denote the multi-set of 
 degrees of the $K$ non-intersecting
neighbors of $v$ and $w$, respectively. The three items above imply the following bound.
\ban{
\IP(\nei_2(v)=\nei_2(w)) 
& \leq 2 \exp\left\{-\tsfrac{\eps_1^2}{3} N p_N\right\}
	 +\frac{2 C \exp\left\{-\tsfrac{Np_N}{6} (\eps_2 - 5 p_N)\right\}}{\sqrt{N p_N}}   \label{der2p} \\
	 &\qquad + \frac{C}{\sqrt{N p_N}} \IP\left[ \{D(v)\}=\{D(w)\} |\mathcal{E}\right], \label{der3p}
}
where  $\mathcal{E}=\{M>0, (1/4)Np_N< K < 2 Np_N\}$. 
Since $\log(N)^2/N \leq p_N\leq N^{-3/5}$, the two terms of~\eq{der2p} are easily seen to be $\lito(1/N^2)$ so we only
need to bound~\eq{der3p}.

Write $D_i(v)=V_i+A_i+1$, where $V_i$ is the number of edges between the vertex $v_i$ representing $D_i(v)$ and the $N-2K-2$ vertices
not in $\{v\}\cup\{w\}\cup\left(\nei_1(v)\cup\nei_1(w)\right)/\left(\nei_1(v)\cap\nei_1(w)\right)$ and $A_i$ is the number of
edges between $v_i$ and the remaining $2K-1$ potential neighbors, not including $v$
and $w$. Similarly, write $D_i(w)=W_i+B_i$.
Note that given $M>0$ and $K$,  $\{V_1,\ldots, V_K\}$ and $\{W_1,\ldots, W_K\}$
are two independent (unordered) collections of i.i.d.\ $\Bi(N-2K-2, p_N)$ random variables that are also independent of 
the $A_i$'s and $B_i$'s.

We show $(i)$ that with sufficiently high probability, $A_i$ and $B_i$ are bounded by a constant and $(ii)$ that the chance that
independent binomial multi-sets are within constants is small.

For $(i)$,  the  $A_i$'s and $B_i$'s form the degrees of an \ER\ graph on $2K$ vertices with edge parameter $p_N$
and so are each marginally distributed $\Bi(2K-1, p_N)$. 
Thus,
\bes{
\IP(A_i < x, B_i &<x, i=1,\ldots, K)
\geq 1-2K \IP(A_1>x)
 \geq 1-2K \left(\frac{e}{x}\right)^x \left((2K-1)p_N\right)^x,
}
where we have used standard tail bounds on the binomial distribution in the Poisson regime stated in Lemma~\ref{lemchernbd}.
Note that setting $x=13$ (any $x>12$ works) and using that $p_N\leq N^{-3/5}$ we find that if $K < 2 Np_N$, then
\ben{
\IP(\max_{i}\{A_i, B_i\}>13) \leq \lito(N^{-2}).
} 

At this point we only need to show that for $\{V_1,\ldots, V_K\}$ and $\{W_1,\ldots, W_K\}$
two independent (unordered) collections of i.i.d.\ $\Bi(N-2K-2, p_N)$ random variables 
with $Np_N/3<K < 2 Np_N$, and for fixed non-negative $A_1, \ldots, A_K, B_1, \ldots, B_K$ 
such that each $A_i$ and $B_i$ are no greater than $13$, 
\besn{\label{tt19}
\IP(\{V_1+A_1,\ldots,V_K+A_K\}=\{W_1+B_1,\ldots,W_K+B_K\}) = \lito(1/N^2).
}

Rather than dealing with the multi-sets, we look instead at the (nearly multinomial) vectors of counts. For $i=0,\ldots, N-2K+12$, let 
$X_i=|\{j: V_j+A_j=i\}|$ be the number of the $(V_j+A_j)$'s that are 
equal to $i$ and $Y_i=|\{j: W_j+B_j=i\}|$ be the analogous counts for the $(W_j+B_j)$'s. 
The left hand side of~\eq{tt19} is bounded by
\besn{\label{tt100}
&\IP\left( X_j = Y_j, j=-13, \ldots,  N-2K+12\right)
 \leq \IP\left( X_{j_i}=Y_{j_i},  i=0, \ldots, \lfloor \alpha \sqrt{N p_N} \rfloor-1 \right),
}
where $\alpha>0$ will be chosen later and we define $j_i= \lfloor Np_N \rfloor +i$. To shorten formulas define the index set
$\Index=\Index(N, \alpha):=\{0,\ldots,  \lfloor \alpha \sqrt{N p_N} \rfloor-1 \}$.
We bound the probability~\eq{tt100} by showing first that for an appropriate $\delta>0$, 
\besn{\label{tt101}
&\IP(X_{j_i}> (1+\delta) \IE X_{j_i}, \mbox{ for some } i\in \Index 
) = \lito(N^{-2}),
}
and then that given $X_{j_i}\leq  (1+ \delta) \IE X_{j_i}$ for all $i\in \Index$, 
 we apply
the local central limit theorem to the $Y_{j_i}$ (represented as sums of independent Bernoulli variables) to show that
the event on the right hand side of~\eq{tt100} has chance $\lito(N^{-2})$.

To show~\eq{tt101}, first note that by the local central limit theorem for the binomial distribution (noting that $p_N\to0$),
 there are positive constants $c_1=c_1(\alpha)$ and $c_2$ such that for all $i\in \Index$ 
 and $k=1,\ldots, K$,
\besn{\label{tt102}
\frac{c_1}{\sqrt{N p_N}}\leq \IP(V_k+&A_k= j_i),  \IP(W_k+B_k= j_i)\leq \frac{c_2}{\sqrt{N p_N}}.
}
Thus, for each $j_i$, $X_{j_i}$ is a sum of 
$K$ independent Bernoulli variables, each having success probability upper and lower bounded
as per~\eq{tt102}, and using this, a union bound, Lemma~\ref{lemchernbd}, and the bounds on $K$ and $p_N$, we have
\ba{
\IP(X_{j_i}>(1+\delta) \IE X_{j_i}, &\mbox{ for some } i\in \Index) \\
&\qquad \leq \sum_{i\in \Index} 2 \exp\left\{- \frac{\delta^2}{2+\delta} \IE X_{j_i}\right\} \\
&\qquad \leq 2 \alpha \sqrt{N p_N} \exp\left\{- \frac{\delta^2}{2+\delta} \frac{ c_1}{3}\sqrt{N p_N }\right\} \\
&\qquad \leq 2 \alpha N^{1/5} N^{-\frac{\delta^2}{2+\delta} \frac{ c_1}{3}}\lito(1).
}
Now choosing 
\be{
\delta=\frac{1+\sqrt{1+40c_1/27}}{10c_1/27},
}
shows~\eq{tt101} is satisfied, since for this choice of $\delta$,
\be{
-\frac{\delta^2}{2+\delta} \frac{ c_1}{3}+1/5=-2.
}

To finish the proof, we show that for an appropriate choice of $\alpha$ (small),
\bes{
&\IP\left( X_{j_i}=Y_{j_i},  i\in \Index \big| X_{j_i} \leq (1+ \delta) \IE X_{j_i},  \mbox{ all } i\in \Index\right) =\lito(N^{-2}).
}
Let $K_0=K$ and $K_i=K-\sum_{\ell=0}^{i-1} Y_{j_\ell}$
and define $\mc{F}_i$ to be the sigma field generated by $Y_{j_0}, \ldots, Y_{j_i}$. Observe that 
for each $i\in \Index$,  given $\mc{F}_{i-1}$, $Y_{j_i}$ is a sum of $K_i$ Bernoulli variables, 
each having success probability $Q$ satisfying (using~\eq{tt102})
\bes{
\frac{c_1}{\sqrt{N p_N}}& \leq \frac{c_1/\sqrt{N p_N}}{1-i c_1/\sqrt{Np_N}}\leq Q 
\leq \frac{c_2/\sqrt{N p_N}}{1-i c_2/\sqrt{Np_N}}\leq \frac{c_2}{\sqrt{N p_N}}(1- \alpha c_2)^{-1}.
}
So we demand that $(1-\alpha c_2)>0$ which is not an issue: changing $\alpha$ affects only $c_1$ and $\delta$ in the argument above.
Moreover, by decreasing $\alpha$, we increase $c_1$, and as $\alpha\to0$, $c_1$ stays bounded from above (since it's no greater than $c_2$) and thus so does $\delta$. 
The local central limit for sums of independent Bernoulli variables 
implies that 
\besn{\label{tt1000}
\IP\big(Y_{j_i}=X_{j_i} \big| X_{j_i} \leq (1+ \delta) \IE X_{j_i} &\mbox{ all } i\in \Index; Y_{j_\ell}=X_{j_\ell} \mbox{ all } \ell=0,\ldots, i-1; \mc{F}_{i-1} \big) \\
&\leq C\left[ K_i \frac{c_1}{\sqrt{N p_N} }\left(1-\frac{c_2}{\sqrt{N p_N}}(1- \alpha c_2)^{-1}\right)\right]^{-1/2},
}
for some constant $C$. Now the condition that $X_{j_i} \leq (1+ \delta) \IE X_{j_i}$ and the lower bound on $K$ implies that 
\ba{
K_i&\geq \frac{N p_N}{3}- (1+\delta) \sum_{\ell=0}^{i-1} \IE X_{j_\ell}  \\
&\geq  \frac{N p_N}{3}- (1+\delta) \sum_{\ell\in \Index} \IE X_{j_\ell} \geq  \frac{N p_N}{3}- (1+\delta)\alpha  2 c_2 N p_N,
}
where we have used that $\IE X_{j_i}\leq K c_2/\sqrt{N p_N}\leq 2c_2 \sqrt{N p_N}$. By choosing $\alpha$ small enough (so that
$1/3-2c_2(1+\delta)\alpha >0$) we find that $K_i$ is at least of order $N p_N$ for all $i\in \Index$ so that~\eq{tt1000} is
$\bigo\left( (Np_N)^{-1/4} \right)$. Now moving through $\Index$ sequentially, we have  
\ba{
&\IP\left( X_{j_i}=Y_{j_i},  i\in \Index \big| X_{j_i} \leq (1+ \delta) \IE X_{j_i},  \mbox{ all } i\in \Index\right)\\
&\hspace{1cm}=\exp\left\{-\frac{\alpha}{4} \sqrt{N p_N} \left(\log(N p_N) + \bigo(1)\right) \right\} \\
&\hspace{1cm}\leq \exp\left\{-\frac{\alpha}{4} \log(N)  \left(\log(\log(N)) + \bigo(1)\right) \right\} =\lito(N^{-2}).
\qedhere
}
 
 \end{proof}

\begin{lemma}\label{lemchernbd}
Let $X$ be the sum of independent indicators. Then for any $\eps>0$,
\ba{
\IP(X \leq \IE X (1-\eps))&\leq \exp\left\{-\frac{\eps^2}{2} \IE X\right\}, \\
\IP(X \geq \IE X (1+\eps))&\leq \exp\left\{-\frac{\eps^2}{2+\eps} \IE X\right\}.
}

If $X$ is a binomial distribution and $x>0$, then
\be{
\IP(X>x)\leq \left(\frac{e}{x}\right)^x (\IE X)^x.
}
\end{lemma}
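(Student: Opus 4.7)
The plan is to derive all three inequalities from the standard moment generating function (MGF) / Chernoff method, since each is a well-known tail bound whose proof follows the same template: bound the MGF of the sum by that of the corresponding Poisson random variable, apply Markov to $e^{tX}$, and optimize over $t$.

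First, write $X=\sum_{i} X_i$ with $X_i$ independent indicators having $\IP(X_i=1)=p_i$, and set $\mu=\IE X=\sum_i p_i$. Using the inequality $1+y\leq e^y$, for every $t\in\mathbb{R}$,
\be{
\IE e^{tX}=\prod_i(1+p_i(e^t-1))\leq \prod_i \exp\{p_i(e^t-1)\}=\exp\{\mu(e^t-1)\}.
}
For the upper tail, apply Markov's inequality with $t=\log(1+\eps)>0$ to obtain
\be{
\IP(X\geq \mu(1+\eps))\leq \exp\bigl\{-\mu[(1+\eps)\log(1+\eps)-\eps]\bigr\},
}
and then invoke the elementary inequality $(1+\eps)\log(1+\eps)-\eps\geq \eps^2/(2+\eps)$ for $\eps>0$ (verified by comparing derivatives at $\eps=0$). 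For the lower tail, take $t=\log(1-\eps)<0$ to get
\be{
\IP(X\leq \mu(1-\eps))\leq \exp\bigl\{-\mu[(1-\eps)\log(1-\eps)+\eps]\bigr\},
}
and apply $(1-\eps)\log(1-\eps)+\eps \geq \eps^2/2$ on $\eps\in(0,1)$ (again a derivative comparison).

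For the Poisson-tail bound, let $X\sim\Bi(n,p)$ and $\mu=np$. The same MGF bound $\IE e^{tX}\leq e^{\mu(e^t-1)}$ combined with Markov gives $\IP(X\geq x)\leq e^{-tx+\mu(e^t-1)}$ for every $t>0$. Optimizing by choosing $t=\log(x/\mu)$ (legitimate whenever $x>\mu$; otherwise the claimed bound is trivial since its right-hand side exceeds $1$) yields
\be{
\IP(X\geq x)\leq \left(\tfrac{\mu}{x}\right)^x e^{x-\mu}\leq \left(\tfrac{e\mu}{x}\right)^x,
}
which is the stated inequality (noting $\IP(X>x)\leq \IP(X\geq \lceil x\rceil)$ handles non-integer $x$).

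None of the steps is really the hard part; the only place to be mildly careful is verifying the two scalar inequalities $(1+\eps)\log(1+\eps)-\eps\geq \eps^2/(2+\eps)$ and $(1-\eps)\log(1-\eps)+\eps\geq \eps^2/2$, which sharpen the generic Chernoff estimate $e^{\mu(e^t-1)-t\mu(1\pm\eps)}$ into the clean exponential form stated in the lemma. Both are classical and can be confirmed by showing the difference function vanishes at $\eps=0$ with nonnegative derivative throughout the relevant range.
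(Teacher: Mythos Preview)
Your proof is correct and follows essentially the same Chernoff/MGF route as the paper. The paper simply cites the first two inequalities as ``standard'' without proof, whereas you supply the details; for the binomial tail the paper uses the parameter choice $\theta=\log(1+x/\IE X)$ rather than your optimal $t=\log(x/\IE X)$, but both choices yield the stated bound after a final weakening.
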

\begin{proof}
The first statement is a standard Chernoff bound for sums of independent indicators. The second follows
in the usual way but we prove this particular form. For any $\theta>0$, a direct computation yields
\be{
\IP(X>x)\leq e^{-\theta x} \IE e^{\theta X} \leq \exp\left\{\IE X (e^\theta - 1)-\theta x\right\}.
}
Setting $\theta =\log(1+x/\IE X)$ in the previous formula and simplifying yields
\be{
\IP(X>x)\leq \left(\frac{e}{x+\IE X}\right)^x (\IE X)^x \leq \left(\frac{e}{x}\right)^x (\IE X)^x,
}
as desired.
\end{proof}

We finish the section with a couple open problems. In Theorem~\ref{properdense} is it possible to identify from $2$-neighborhoods?
What happens in the regime of $p_N$ we don't handle, where $\omega(N^{-1})= p_N =\bigo(\log(N)^2/N)$? 

\section{The Random Jigsaw Puzzle}\label{secpuzz}

We use a more intuitive description than that of Example~\ref{ex3} in the introduction.
The puzzle is given by an $n \times n$ grid of squares where adjacent squares share an edge. 
Each {\em edge} of a square in the grid is colored uniformly at random from one of $q$ colors.  A {\em piece} of the puzzle consists of a 
``vertex" at the center of the square along with the four adjacent colored edges. 
Vertices at the border have a half-edge that also gets colored, so each vertex has exactly four edges associated to it and ``edge pieces" are not distinguished. We allow the pieces to be rotated, but not flipped over, and the
goal is to identify the original colors of edges of the puzzle, up to rotation (but not flips) 
of the puzzle.

We first use blocking configurations to obtain an easy negative result.
\begin{proposition}\label{propjig1}
If $q = o(n^{2/3}) $ then the probability of identification goes to $0$ as $n \to \infty$. 
\end{proposition}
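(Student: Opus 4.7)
The plan is to exhibit a small blocking pattern whose expected count diverges when $q = o(n^{2/3})$, and then apply the second moment method. Call an unordered pair $\{D_1, D_2\}$ of horizontal dominoes (pairs of horizontally adjacent squares of the grid) a \emph{swap configuration} if both dominoes lie in the interior, are at graph distance at least two (so they share no vertex or edge), the six \emph{boundary} edges of $D_1$ agree pairwise with the six boundary edges of $D_2$ (top and bottom of each of the two squares in the domino, plus the outer left and outer right edges), and the \emph{internal} edges of the two dominoes (each being the edge between the two squares within a domino) carry different colors.

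The first step is to verify that a swap configuration blocks identifiability. Exchanging the ordered pair of pieces of $D_1$ with that of $D_2$ preserves every matching constraint outside the two dominoes because the six boundary colors agree, preserves the internal matching within each domino because each domino is moved as a unit together with its internal edge, and produces a strictly different 4-tuple at four positions because the internal edges differ. This is in the spirit of the switching used in Lemma~\ref{lemblock}. One should also check that the swapped assembly is not a grid-symmetric image of the original; since any nontrivial symmetry of a uniformly random $q$-edge-coloring holds with exponentially small probability, this is automatic.

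The second step is the moment computation. Let $X$ be the number of swap configurations. There are $\Theta(n^2)$ interior horizontal dominoes and $\Theta(n^4)$ far-apart unordered pairs of them; for any such pair the six boundary equalities involve twelve distinct independent uniform edges and hold with probability $q^{-6}$, while the two independent internal edges differ with probability $1-1/q$. Hence $\IE X = \Theta(n^4 q^{-6})$, which tends to infinity precisely when $q = o(n^{2/3})$. For the variance I would group ordered pairs of swap configurations by the overlap pattern of the four participating dominoes: pairs in which all four dominoes are mutually edge-disjoint contribute $(1+\lito(1))(\IE X)^2$ by independence, and for the remaining pairs a short case check shows that each forced coincidence (a shared vertex or edge between two of the dominoes) reduces the number of placements by a factor of $\Theta(n)$ that outweighs any savings in the number of required edge-equalities. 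Concluding by Paley--Zygmund yields $\IP(X > 0) \to 1$, and the puzzle is not identifiable with probability tending to one.

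The main obstacle is the case analysis in the variance bound. Horizontal dominoes can share zero, one, or two vertices, and when a shared vertex happens to be incident to an edge appearing in a matching constraint the effective number of independent constraints can drop in several distinct ways. The bookkeeping is routine combinatorics and no individual case is delicate, but enough cases arise that some care is needed to confirm that every lower-order term is genuinely $\lito((\IE X)^2)$.
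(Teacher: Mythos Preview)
Your approach is correct and mirrors the paper's: the same domino swap configuration, the same second-moment strategy, the same handling of the automorphism caveat. The only difference is in the variance step. The paper short-circuits your case analysis by observing that, because the edge colors are i.i.d.\ \emph{uniform}, the indicators $X_{\{D_1,D_2\}}$ are pairwise independent---even when dominoes from two different swap pairs coincide or are adjacent. This is the same phenomenon used in Proposition~\ref{proplatticeblock}: for uniform labels the events ``$\mathrm{col}(e)=\mathrm{col}(f)$'' are pairwise independent regardless of whether they share an edge, and this propagates to the full indicators. One then gets $\var(Y)\le \IE Y$ in a single line, and your ``main obstacle'' disappears. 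If you actually execute your overlap case check you will find that every off-diagonal cross term factors exactly as $\IP(X_{\alpha}=1)\IP(X_{\beta}=1)$, so the two routes reach the same place; the paper's just arrives without the bookkeeping.
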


\begin{proof}
Call a pair of pieces {\em aligned} if it is at position $(2j,2i),(2j,2i+1)$. 
Let $X_{i,j,i',j'}$ be the indicator of the following event. Consider the map $\pi: (x,y) \to (x-2j+2j',y-2i+2i')$. Let $X_{i,j,i',j'}$ be $1$ if all 
edges emanating from $(2j,2i),(2j,2i+1)$ have the same color as their $\pi$ images except that the edge connecting $(2j,2i)$ and $(2j,2i+1)$ 
has a different color than its image under $\pi$. Note that if $X_{i,j,i',j'} = 1$ then there isn't a unique solution to the puzzle as the two aligned parts can be exchanged. Note that here we use the fact that with high probability there are no automorphism of the labelled puzzle (even excluding two neighborhoods). 
Let 
\[
Y = \sum_{(i,j) \neq (i',j')} X_{i,j,i',j'}.
\]
Then $\IE X_{i,j,i',j'} = q^{-6}(1-1/q)$ and moreover, it is easy to check that the $X_{i,j,i',j'}$ are pairwise independent. 
Thus
\[
\var(Y) =  \sum_{(i,j) \neq (i',j')} \var(X_{i,j,i',j'}) \leq \IE [Y]
\]
It follows that if $n^4 q^{-6} \to \infty$ then $\IE[Y] \to \infty$ and so by the second moment method, $\IP[Y \geq 1] \to 1$, concluding the proof.
\end{proof} 

On the other hand, if $q \gg n^4$, then by considering expectations, the number of edges with the same color tends to zero in probability and identification is trivial,
so if $q=\omega(n^4)$, then the probability of identification tends to $1$ as $n\to\infty$.
In fact we can do better.
\begin{proposition}\label{propjig2}
If $q = \omega(n^2)$ then it is possible to assemble  the puzzle with probability tending to one. 
More formally, if $q = \omega(n^2)$ then there exists an algorithm such that the probability it
correctly assembles the puzzle tends to one.
\end{proposition}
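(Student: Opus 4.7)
The plan is to show that with probability tending to one when $q=\omega(n^2)$, every piece of the puzzle is uniquely identified by the ordered pair of colors on its top and left edges, and then to use this uniqueness to drive a simple row-major assembly algorithm. I treat the border edges of the grid as carrying a special ``flat'' label distinct from the $q$ jig colors, so that the four corner pieces are identifiable as those with two adjacent flat edges.

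Define the event $E$ that for every pair of distinct grid positions $(i,j) \neq (i',j')$, the ordered pair of colors on (top, left) edges at $(i,j)$ differs from the corresponding pair at $(i',j')$. I would bound $\IP(E^c)$ by the first moment method. For two distinct positions the four edges involved --- two tops and two lefts --- are four distinct grid edges and hence have independent colors. If both positions are interior, both tops and both lefts are uniform random colors, so the collision probability is $q^{-2}$; with $\bigo(n^4)$ such pairs this contributes $\bigo(n^4/q^2)$. If both positions are distinct non-corner first-row pieces the tops are both flat and automatically match, so collision requires only that the lefts agree, with probability $q^{-1}$; with $\bigo(n^2)$ such pairs this contributes $\bigo(n^2/q)$. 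The symmetric case of two first-column positions contributes $\bigo(n^2/q)$ as well. All remaining mixed pairs (e.g.\ interior vs.\ first row) are automatically collision-free because the types of at least one coordinate differ (flat vs.\ random). Summing, $\IP(E^c) = \bigo(n^4/q^2 + n^2/q) = \lito(1)$ when $q=\omega(n^2)$.

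On event $E$ the puzzle is assembled as follows. Identify the four corner pieces as those with two adjacent flat edges and pick one to play the role of $(1,1)$, orienting it so its flat edges are the top and left; the four choices correspond precisely to the four rotations mentioned in the statement. Then fill in the grid in row-major order: at each position $(i,j)$, the required top color is flat if $i=1$ and otherwise the bottom color of the piece already placed at $(i-1,j)$, and similarly the required left color is flat if $j=1$ and otherwise the right color of $(i,j-1)$. Event $E$ guarantees that at most one piece has the required (top, left) pair; since the piece in the true arrangement at position $(i,j)$ has exactly this pair, it is the unique one and is placed correctly.

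The main obstacle is the case analysis in bounding $\IP(E^c)$: although the interior-interior contribution $\bigo(n^4/q^2)$ is small by a wide margin, the first-row and first-column subcases each give only $\bigo(n^2/q)$, and it is this latter term that precisely matches the $q=\omega(n^2)$ threshold in the statement. Correctness of the assembly algorithm is immediate once event $E$ is set up, and no further probabilistic work is required.
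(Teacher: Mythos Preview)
Your argument is clean, but it rests on a modeling assumption that the paper does not make. You write that you ``treat the border edges of the grid as carrying a special `flat' label distinct from the $q$ jig colors,'' which lets you identify the four corner pieces and anchor the row-major assembly. In the paper's model, however, every edge --- including those on the boundary of the grid --- is assigned one of the $q$ random colors (the formal description says each edge is colored uniformly from $q$ colors, and the edge count $m=2n(n+1)$ used in the paper's proof includes the boundary edges); the remark following the paper's proof makes this explicit by stating that ``edge'' pieces of the puzzle cannot be distinguished from interior pieces. With the flat-border assumption removed, your algorithm has no way to identify a corner to start from, and the row-major scheme collapses: every piece looks interior, and there is no canonical $(1,1)$.

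The paper handles precisely this difficulty by a different route: it first joins pairs of pieces across edges whose color appears exactly once in the whole puzzle, uses a bond-percolation argument to show that these uniquely-colored edges contain a connected component touching all four sides of the box, and then fills in the remaining holes by repeatedly placing the unique piece that matches any vacant ``corner'' (a position with two already-placed neighbors). Your first-moment bound on $(\text{top},\text{left})$ collisions is correct as a calculation, and indeed the paper's own remark acknowledges that with distinguishable border pieces the problem becomes much easier --- essentially your argument. But the proposition as stated concerns the harder model, and your proof does not address it.
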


\begin{proof} 
We show that with probability tending to one, we can assemble the puzzle by
first joining edges with colors that appear exactly once in the puzzle and then filling in any remaining holes. 
Write $q = 2 c n (n+1)$ and let $m = 2 n(n+1)$ be the number of edges.
Let $U$ be the number of colors which appear exactly once. 
Then for large enough $m$,
\[
\IE U = q \frac{m}{q} (1-1/q)^{m-1}\geq m(1-2/c).
\]
Also note that $U$ is a function of the independent edge colors such that if a single color changes, then $U$ can change by at most $2$.
Thus we can apply McDiarmid's inequality for bounded differences to obtain that
\be{
\IP(U\geq m(1-2/c))\geq 1- \exp\left\{\frac{-m}{2c^2}\right\}. 
}
Given $U$, the locations of the edges that receive unique colors is 
exchangeable and so on $U\geq m(1-2/c)$, $U$ dominates the Bernoulli-$(1-3/c)$ product measure
on edges with chance at least $1-\exp\{-m(1-2/c)^2/(3-1/c) \}$, using, e.g., the Chernoff bound of Lemma~\ref{lemchernbd}. 
Thus, on the good event that $U\geq m(1-2/c)$ and at most $m(1-2/c)$ of the Bernoulli variables are $1$,
we can generate the locations of the unique colored edges by first generating the Bernoulli variables on edges 
and then adding the appropriate number of unique colors to the remaining edges chosen uniformly at random. 

If $c$ is large enough so that $1-3/c > 0.9$ (say), then 
standard results in percolation theory 
\cite[(8.97-8)]{Grimmett1999}
imply that 
the 
graph induced by the positive Bernoulli variables in the box (which on the good event 
are dominated by the unique edge color indicators) 
has a connected component touching all boundaries.  
Once such a component is determined, it is not hard to complete the puzzle.
By considering expectations,
the probability of having two pieces that share two or more colors tends to zero. 
Thus given a location of a piece neighboring two pieces that are already assembled -- i.e., an empty corner -- there is a unique piece that can fit there. 

Consider the process of starting with component formed by joining edges with unique colors 
and then repeatedly adding pieces to vacant corners. 
With probability tending to one, when this process terminates, the collection of vertices covered has no empty corners.
It is easy to see that this implies that the complete puzzle has been recovered. 
\end{proof} 

\begin{remark}
We have assumed that ``edge" pieces of the puzzle cannot be distinguished from interior pieces. If the edge pieces can be distinguished,
then the proposition still holds since with probability tending to one, it is possible to construct the border by matching colors 
that only appear once on the border and then filling in the interior using corners as is done in the proof above. It's interesting
that without the border, we need a non-trivial result from percolation theory to start the algorithm.
\end{remark}

\section{Conclusion and Additional Open Problems}\label{secopen}

A number of open problems regarding sharper bounds and extensions to other models are mentioned in the text and can be summarized as follows:

\begin{problem} \label{p:1}
For the graph shotgun problem on boxes in $\bz_n^d$ with labels given by i.i.d., Ising, Potts model, proper coloring etc., find the threshold 
for the graph identification problem. 
\end{problem} 
It is natural to consider canonical fixed graphs other than the lattice. 
As illustrated in the introduction, the case of regular trees should be rather straightforward for many of these models. 
However, other families of graphs may be amenable to analysis, e.g., expander graphs.

\begin{problem} \label{p:2} 
For the graph shotgun problem on a random graph model, e.g., \ER, preferential attachment, configuration, random regular graphs, etc., find the threshold for the graph identification problem. 
\end{problem}
This question applies to both the labeled and unlabeled case.
We can ask about generalizations of the random jigsaw problem.
\begin{problem}
Find the threshold for identification for the jigsaw problem on other lattices, for example, hexagonal pieces or higher dimensional square lattices. (Thanks to a reviewer for the suggestion to add this problem.)
\end{problem}

Another problem is to study the setup with more realistic assumptions.
\begin{problem}
Analogous to DNA shotgun assembly, in practical problems the neighborhoods will be of different sizes and there will be errors in the samples. How does this affect identifiability? 
\end{problem}

It is also interesting to understand if the graph identification problem shares properties of other constraint satisfaction problems:
\begin{problem} \label{p:3}
Are there graph shotgun problems for which there is a ``computationally hard" but identifiable regime.
\end{problem}
This problem identifies graph shotgun assembly as a constraint satisfaction problem: for each neighborhood we have to find all intersecting 
neighborhoods. 
In the language of constraint satisfaction, the problem would be classified as {\em planted}, meaning that we start from a solution and then impose constraints based on the solution. 

\section*{Acknowledgments}
E.M.\ would like to acknowledge the support of the following grants: NSF grants DMS 1106999 and CCF 1320105, DOD ONR grant N00014-14-1-0823, and grant 328025 from the Simons Foundation.
N.R.\ received support from ARC grant DP150101459
and thanks Aslan Tchamkerten
for helpful discussions 
about DNA shotgun assembly.
We thank James Lee for suggesting the terminology ``jigs", Chenchao Chen for helpful comments, and reviewers.

\end{document}